\numberwithin{equation}{section}
\newcommand{\ol}{\overline}
\newcommand{\wt}{\widetilde}
\newcommand{\norm}[1]{\left\Vert#1\right\Vert}
\newcommand{\abs}[1]{\left\vert#1\right\vert}
\newcommand{\rl}{{\mathbb{R}}}
\newcommand{\cx}{{\mathbb{C}}}
\newcommand{\D}{\mathbb{D}}
\newtheorem{thm}{Theorem}[section]
\newtheorem*{thm*}{Theorem}
\newtheorem{prop}{Proposition}[section]
\newtheorem{lem}[prop]{Lemma}
\theoremstyle{definition}
\newtheorem{defn}[thm]{Definition}
\title[Bergman Kernels]{ Bergman Kernels of elementary Reinhardt Domains}
\author{Debraj Chakrabarti}
\email{chakr2d@cmich.edu}
\urladdr{http://people.cst.cmich.edu/chakr2d/}
\author{Austin Konkel}
\email{konke1am@cmich.edu }
\author{Meera Mainkar}
\email{maink1m@cmich.edu}
\urladdr{http://people.cst.cmich.edu/maink1m/}
\author{Evan Miller}
\email{mille7em@cmich.edu }
\address{Department of Mathematics, Central Michigan University, 
Mt. Pleasant, MI 48859, USA}
\thanks{Debraj Chakrabarti, Austin Konkel and Evan Miller were partially supported by NSF grant DMS-1600371. Austin Konkel was also supported by a Student Research and Creative Endeavors grant from Central Michigan University.}
\subjclass[2010]{32A25, 32A07}
\begin{document}
\begin{abstract}
    We study the Bergman kernel of certain domains
    in $\mathbb{C}^n$, called elementary Reinhardt domains, generalizing the classical Hartogs triangle. For some elementary Reinhardt domains, we explicitly compute the kernel,
    which is a rational function of the coordinates. For some other such domains, we show that the kernel is not a rational function. For a general elementary Reinhardt 
    domain, we obtain a representation of the 
    kernel as an infinite series.
\end{abstract}
\maketitle
\section{Introduction}

\subsection{Elementary  Reinhardt domains} Let $\D^n=\{z\in\cx^n\mid \abs{z_j}<1 \text{ for } 1\leq j \leq n\}$
denote the unit polydisc in $\cx^n,n\geq 2$, and  let 
$k=(k_1,\dots, k_n)\in \mathbb{Z}^n$ be a 
multi-index. The goal of this paper is the study of the Bergman kernel of the domain
\begin{equation}\label{eq-erd}
  \mathscr{H}(k)= \left\{z\in \D^n\mid z^k \text{ is defined, and }  \abs{z^k}<1\right\}, \end{equation}
where we use the standard multi-index convention  
$z^k=z_1^{k_1}z_2^{k_2}\dots z_n^{k_n}$,  and the only way this can fail to be defined is if its evaluation involves division by zero.
We will call the domain  $\mathscr{H}(k)$ the \emph{elementary  Reinhardt domain} associated to the multi-index $k$ (cf. \cite[pp.33 ff.]{jarpflubook}, where this terminology is used, with a slightly different definition).  A famous example of such a domain is the \emph{Hartogs triangle} 
\[ \mathscr{H}(1,-1) =\{\abs{z_1}< \abs{z_2}<1 \}\subset \cx^2,\] a well-known source of counterexamples in several complex variables (see, e.g., \cite{shawhartogs}).

It is easy to see that $\mathscr{H}(k)$ is logarithmically convex, and therefore pseudoconvex  (see \cite{range}). If the multi-index $k$ contains both positive and negative entries, then   $\mathscr{H}(k)$ is
 a Reinhardt domain with the origin as a boundary point, so it follows (see \cite{chaksibony}) that each
  holomorphic function smooth up to the boundary on $\mathscr{H}(k)$ extends to a larger, fixed
   domain, a property which is classical in the special case of the Hartogs triangle (see \cite{sibony1975,behnke1933}). Therefore, $\mathscr{H}(k)$ does not have 
   a basis of Stein neighborhoods, and is not a so-called $\mathcal{H}^\infty$-domain of holomorphy. This makes 
   domains such as $\mathscr{H}(k)$ particularly interesting from the point of view of function theory on non-smooth domains, since each smoothly bounded pseudoconvex domain is in fact an $\mathcal{H}^\infty$-domain of holomorphy (\cite{catlin,hakimsibony}).

   Recently, the unusual $L^p$-mapping properties of the Bergman projection on the \emph{generalized Hartogs triangle}
   $\mathscr{H}(m, -n)\subset \cx^2$  (where $m,n$ are coprime positive integers)  have received the attention of several authors (see \cite{chakzeytuncu,Edh16,EdhMcN16,EdhMcN16b,ChEdMc}).  In many of these investigations, 
the explicit form of the Bergman kernel of  $\mathscr{H}(m, -n)\subset \cx^2$  plays a crucial role. 
The elementary Reinhardt domains are a natural class generalizing the Hartogs triangle. Motivated by this, in this paper, we make a preliminary 
study of the Bergman kernels of  the domains $\mathscr{H}(k)$. In particular, we investigate
whether such a Bergman kernel is a rational function of the coordinates, as it indeed is if $n=2$ (see \cite{Edh16,EdhMcN16b}). Other recent attempts at  higher dimensional generalizations may be found in  \cite{park,chen, huo,chenkrantz}. For planar domains, the rationality (or algebraicity) of the Bergman kernel has important function-theoretic repercussions (see \cite{bellquadrature}). It would be interesting to see whether something similar is true for the elementary Reinhardt domains.

 From now on we will assume that the 
multi-index $k=(k_1,\dots, k_n)$ defining the domain \eqref{eq-erd} has the following properties:
\begin{enumerate}
    \item At least one of the components of the multi-index is positive, at least one of the components is negative, and no component is zero.
    
    We will call the number of positive components of $k$, the \emph{signature} $s$ of the elementary Reinhardt domain $\mathscr{H}(k)$. 
\item If $\mathscr{H}(k)$  has signature $s$, after renaming the coordinates, we will assume without loss of generality
that $k_j>0$ for $1\leq j\leq s$ and $k_j<0$ if $s+1\leq j \leq n$.
\item We will also assume without loss of generality that the numbers $k_1,\dots, k_n$ are relatively prime.
\end{enumerate}
\subsection{Explicit Formula}
For elementary Reinhardt domains of signature 1, we now give an  explicit formula for the Bergman kernel as a rational function of the coordinates.

To state the result, introduce the following notation. For integers $\lambda$ and $\mu$, let
\begin{equation}\label{eq-dcombinations}
       \mathsf{D}_\lambda(\mu) = 
       \begin{cases} 
      0 & \mu \leq -1  \text{ or } \mu \geq 2\lambda-1\\
      \mu + 1 & 0 \leq \mu \leq \lambda-1 \\
      2\lambda-1-\mu & \lambda \leq \mu \leq 2\lambda-2. \\
   \end{cases}
   \end{equation}
   It will be seen in Section~\ref{sec-explicit} below, that the seemingly complicated expression  $ \mathsf{D}_\lambda(\mu)$ arises as the number of 
   solutions in pairs of integers $(x,y)$ of the equation $x+y=\mu$ subject to the constraints $0\leq x,y \leq \lambda-1$ (see \eqref{eq-xplusy}, \eqref{eq-xbounds}, \eqref{eq-ybounds}).
\begin{thm}
\label{thm-computation}
Let $n\geq 2$, let $k_1,\dots, k_n$ be relatively prime
 positive integers,
and let \[k=(k_1, -k_2,\dots, -k_n)\in \mathbb{Z}^n\] be a multi-index.
The Bergman kernel of the elementary Reinhardt domain 
$\mathscr{H}(k)$ 
is given by:
\begin{equation}\label{eq-kscrh}
 \mathbb{B}_{\mathscr{H}(k)}(z,w)= \frac{1}{\pi^n L}
\cdot \frac{ \displaystyle{\sum_{\beta\in \mathfrak{G}} C(\beta) t^\beta}}{\displaystyle{\left(\prod_{b=2}^n t_b^{k_b} - t_1^{k_1} \right)^2 \cdot  \prod_{b=2}^n (1-t_b)^2}},   
\end{equation}
where $t=(t_1,\dots, t_n)\in\cx^n$ with $t_a=z_a\ol{w_a}$ for $1\leq a \leq n$,  and
 \begin{equation}\label{eq-cbeta}
  C(\beta) =\mathsf{D}_K\left(2K-\ell_1(\beta_1+1)-1\right)\cdot \prod_{b=2}^n \mathsf{D}_{\ell_b}(\ell_b(\beta_b+1)+\ell_1(\beta_1+1)-2K-1),   
 \end{equation}
 where the function $\mathsf{D}_*(\cdot)$ is defined in \eqref{eq-dcombinations} above, with
 \[ K = \mathrm{lcm}(k_1,\dots, k_n), \quad \ell_a = \frac{K}{k_a} \quad \text{for  } 1\leq a \leq n, \text{ and } \quad L=\prod_{a=1}^n \ell_a,
 \]
 and where  the finite collection of multi-indices $\mathfrak{G}\subset \mathbb{Z}^n  $ is defined by

    \begin{equation}
    \label{eq-G}\mathfrak{G}= \left\{(\beta_1,\dots, \beta_n)\in \mathbb{Z}^n \mid 0\leq \beta_1\leq 2k_1-2, \text{ and }
     0\leq \beta_b \leq 2k_b \text{ for each } 2\leq b\leq n \right\}.\end{equation}
 
\end{thm}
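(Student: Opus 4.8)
The plan is to build the kernel from the monomial basis and then resum the resulting lattice series in closed form. Since $\mathscr{H}(k)$ is a bounded Reinhardt domain that omits the hyperplanes $\{z_b=0\}$ for $2\le b\le n$ while containing points of $\{z_1=0\}$, every Bergman-space element has an $L^2$-convergent Laurent expansion, and rotational invariance makes the Laurent monomials $z^\alpha$, $\alpha\in\mathbb{Z}^n$, mutually orthogonal. First I would pin down the index set $S$ of exponents with $z^\alpha\in A^2(\mathscr{H}(k))$ and compute the squared norms. Passing to polar coordinates $z_a=r_ae^{i\theta_a}$ and integrating out the angles, $\|z^\alpha\|^2$ reduces to an iterated real integral over $\{0<r_a<1,\ r_1^{k_1}<\prod_{b\ge2}r_b^{k_b}\}$; performing the $r_1$-integral first forces $\alpha_1\ge 0$, and each subsequent $r_b$-integral converges exactly when $k_1(\alpha_b+1)+k_b(\alpha_1+1)>0$, which describes $S$ and gives
\[
\|z^\alpha\|^2=\frac{(2\pi)^n}{2(\alpha_1+1)}\prod_{b=2}^n\frac{k_1}{2\bigl(k_1(\alpha_b+1)+k_b(\alpha_1+1)\bigr)} .
\]

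Substituting into $\mathbb{B}(z,w)=\sum_{\alpha\in S}z^\alpha\overline{w^\alpha}/\|z^\alpha\|^2$ and setting $t_a=z_a\overline{w_a}$ presents the kernel as the explicit series
\[
\mathbb{B}=\frac{1}{\pi^n k_1^{\,n-1}}\sum_{\alpha\in S}(\alpha_1+1)\prod_{b=2}^n\bigl(k_1(\alpha_b+1)+k_b(\alpha_1+1)\bigr)\,t^\alpha .
\]
Next I would resum this. For fixed $\alpha_1$ the sums over the $\alpha_b$ decouple, each being an arithmetico-geometric series $\sum_{m\ge N_b}(k_1 m+k_b(\alpha_1+1))t_b^{m}$ with $N_b=N_b(\alpha_1)=\lfloor-k_b(\alpha_1+1)/k_1\rfloor$; summing it yields a factor $(1-t_b)^{-2}$ times $t_b^{N_b}$, and the key observation is that the surviving linear coefficients $k_1N_b+k_b(\alpha_1+1)$ are bounded (equal to $-k_1$ times the fractional part of $-k_b(\alpha_1+1)/k_1$) and depend only on $\alpha_1$ modulo $k_1$. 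Thus the only factor that grows with $\alpha_1$ is the single term $(\alpha_1+1)$.

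Then I would split the remaining sum over $\alpha_1\ge0$ into residue classes modulo $k_1$. Writing $\alpha_1+1=k_1 j+\rho$ gives $N_b=-k_b j-\lceil k_b\rho/k_1\rceil$, so the monomial becomes $t_1^{\,k_1 j+\rho-1}\prod_b t_b^{-k_b j-\lceil k_b\rho/k_1\rceil}$ and the $j$-summation is of the form $\sum_{j\ge0}(k_1 j+\rho)x^{j}$ with $x=t_1^{k_1}/\prod_b t_b^{k_b}$; this produces the double pole $(\prod_b t_b^{k_b}-t_1^{k_1})^{-2}$, while the decoupled inner sums account for $\prod_b(1-t_b)^{-2}$. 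What is left is a finite sum over the $k_1$ residues $\rho$ and over the two numerator terms contributed by each squared progression. To collapse it into the claimed form I would pass to the common modulus $K=\mathrm{lcm}(k_1,\dots,k_n)$, rescaling each exponent through $\ell_a=K/k_a$; the tent functions $\mathsf{D}_\lambda$ enter here because $\bigl(1+t+\cdots+t^{\lambda-1}\bigr)^2=\sum_\mu\mathsf{D}_\lambda(\mu)t^\mu$ is exactly the pair-counting convolution described after \eqref{eq-dcombinations}, so that each of the $n$ squared denominator factors contributes one such convolution — $\mathsf{D}_K$ for the coupling factor $(\prod_b t_b^{k_b}-t_1^{k_1})^2$ and one $\mathsf{D}_{\ell_b}$ for each $(1-t_b)^2$ — and these Cauchy products assemble into $C(\beta)$ while the constants collect into the prefactor $(\pi^nL)^{-1}$.

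The main obstacle I anticipate is precisely this final bookkeeping: verifying the combinatorial identity that matches the regrouped residue sum, term by term, with $\frac{1}{\pi^nL}\sum_{\beta\in\mathfrak{G}}C(\beta)t^\beta$. Concretely, after clearing denominators one must show that the coefficient of each $t^\alpha$ equals the nested $\mathsf{D}$-convolution of \eqref{eq-cbeta}, confirm the exact arguments $2K-\ell_1(\beta_1+1)-1$ and $\ell_b(\beta_b+1)+\ell_1(\beta_1+1)-2K-1$, and check that the nonvanishing contributions are indexed exactly by the ranges $0\le\beta_1\le 2k_1-2$ and $0\le\beta_b\le 2k_b$ of $\mathfrak{G}$. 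Everything upstream — orthogonality of the monomials, the norm computation, convergence of the series on $\mathscr{H}(k)$, and the identification of the two double-pole denominators — is routine once the index set $S$ is correctly described; the delicate heart of the proof is the lcm-alignment that recognizes the numerator as the product of tent-function convolutions.
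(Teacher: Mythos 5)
Your setup is sound and genuinely different from the paper's route. You work directly on $\mathscr{H}(k)$: the description of the index set $S$, the norm
\[
\norm{z^\alpha}^2=\pi^n\,\frac{1}{\alpha_1+1}\prod_{b=2}^n\frac{k_1}{k_1(\alpha_b+1)+k_b(\alpha_1+1)},
\]
the decoupled arithmetico-geometric sums in $\alpha_b$, and the decomposition of the $\alpha_1$-sum into residue classes modulo $k_1$ are all correct, and they do produce a rational function with denominator $\bigl(\prod_{b}t_b^{k_b}-t_1^{k_1}\bigr)^2\prod_b(1-t_b)^2$ after clearing $\bigl(\prod_b t_b^{k_b}\bigr)^2$. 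This is essentially the Edholm--McNeal sub-kernel summation pushed to $n$ variables. The paper instead computes the kernel of the model domain $\Omega_{n,1}$ (where the norms are reciprocals of polynomials and the series sums trivially), and transfers it to $\mathscr{H}(k)$ via Bell's transformation formula \eqref{eq-bell} for the $L$-sheeted diagonal covering $z_a\mapsto z_a^{\ell_a}$, followed by a root-of-unity averaging argument that kills all monomials whose exponents are not multiples of the $\ell_a$.

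The genuine gap is the last step, which you correctly flag as the heart of the matter but whose proposed mechanism does not match your own computation. In your direct approach the numerator you arrive at is a sum over the $k_1$ residues $\rho$ of products of \emph{linear} factors (each inner sum $\sum_{p\ge0}(k_1(p+1)-\sigma_b(\rho))t_b^p$ contributes a two-term numerator $(k_1-\sigma_b(\rho))+\sigma_b(\rho)t_b$, and likewise for the $j$-sum), so nothing of length $K$ or $\ell_b$ ever appears: the quantities $K=\mathrm{lcm}(k_1,\dots,k_n)$ and $\ell_a=K/k_a$ have no presence in the direct computation, and the ``lcm-alignment'' you invoke has no source. The identity $(1+t+\cdots+t^{\lambda-1})^2=\sum_\mu\mathsf{D}_\lambda(\mu)t^\mu$ is indeed where the tent functions come from in the paper, but there the squared geometric sums of lengths $K$ and $\ell_b$ arise concretely as the polynomial ratios $\bigl(\prod_b r_b^K-r_1^K\bigr)^2/\bigl(\prod_b\ol{\zeta_b}^{j_b}r_b-\ol{\zeta_1}^{j_1}r_1\bigr)^2$ and $(1-r_b^{\ell_b})^2/(1-\ol{\zeta_b}^{j_b}r_b)^2$ needed to put the $L$ branch contributions in Bell's formula over a common denominator (see \eqref{eq-squared}); without the covering map there is no analogue of this step. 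So what remains for you is an unproved polynomial identity equating your $\rho$-indexed numerator with $\frac{k_1^{n-1}}{L}\sum_{\beta\in\mathfrak{G}}C(\beta)t^\beta$, and since the exact form of $C(\beta)$ in \eqref{eq-cbeta} \emph{is} the content of the theorem, the proof is not complete. (A workable direct finish would be to bypass the residue bookkeeping entirely: expand the right-hand side of \eqref{eq-kscrh} as a power series and verify that its coefficients equal $1/\norm{e_\alpha}^2$; but that is a different argument from the one you sketch.)
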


While the expression \eqref{eq-kscrh} is somewhat complicated, it generalizes and extends several 
known results in the literature.
In the special case of the classical Hartogs triangle $\mathscr{H}(1,-1)$, an explicit expression for the Bergman kernel is already found in \cite{brem}. Recently, in \cite{Edh16}, Edholm          
computed  using Bell's formula (see \eqref{eq-bell} below)  the Bergman kernels of $\mathscr{H}(1,-k)$ and
$\mathscr{H}(k,-1)$, where $k\geq 2$ is an integer (``Fat and thin generalized Hartogs triangles"), a  computational \emph{tour de force} which inspired Theorem~\ref{thm-computation}. In \cite{EdhMcN16b}, Edholm and 
McNeal studied $\mathscr{H}(m,-n)\subset\cx^2$, where $m,n$ are 
coprime positive integers, and expressed its Bergman kernel as the sum of $m$ ``sub-Bergman kernels." The sub-Bergman kernels are obtained by summing subseries of the power series \eqref{eq-bergmanseries} representing the Bergman kernel of a Reinhardt domain. These subseries consist of terms 
with monomials whose exponents are represented by straight lines 
of different slopes in the lattice point diagram of monomials, resulting in  a decomposition of the kernel into convenient pieces, which permits the explicit summation of each of the sub-kernels in closed form as a rational function, and determination of the $L^p$-regularity of each piece.  However, our formula \eqref{eq-kscrh}  shows that splitting the kernel into the sub-kernels is  unnecessary, and the main $L^p$ estimate of \cite{EdhMcN16b} could proceed directly from \eqref{eq-kscrh}.
Starting from \eqref{eq-kscrh}, we recapture below in Section \ref{sec-luke} 
the special cases considered in \cite{Edh16}. 
Theorem~\ref{thm-computation} also opens the way to generalize the 
interesting recent results related to $L^p$-regularity of the Bergman projection, duality of Bergman spaces etc. (cf. \cite{EdhMcN16b,chakzeytuncu,ChEdMc}) to higher dimensions.

\subsection{Signatures greater than 1} In signatures $s\geq 2$ (so that the ambient dimension $n\geq 3$) the situation is much less clear, and is worth further study. Here we collect a few observations
which seems to indicate that there are some fundamental differences 
between the cases $s=1$ and $s\geq 2$. In particular, it seems plausible that the Bergman kernels of  
elementary Reinhardt domains of signature $s\geq 2$  can not be represented using a simple
rational function such as \eqref{eq-kscrh}. 

Let $n\geq 2$, and 
let $1\leq s \leq n-1$. We denote by $\Omega_{n,s}$ the elementary 
Reinhardt domain of signature $s$ in $\cx^n$, where each component 
of the defining multi-index is $\pm 1$, i.e.
\begin{equation}\label{eq-Omegans}
\Omega_{n,s} = \mathscr{H}(\underbrace{1,\dots, 1}_s, \underbrace{-1,\dots, -1}_{n-s}), 
\end{equation} 
so that $\Omega_{n,s}=\{z\in \mathbb{D}^n\mid \abs{z_1\dots z_s} <\abs{z_{s+1}z_{s+2}\dots z_n}\}. $
We will call $\Omega_{n,s}$ the \emph{model elementary domain} of signature $s$. It is shown in Proposition~\ref{prop-propermap} below that the model elementary domains are branched covers of all elementary Reinhardt domains.

In Theorem~\ref{thm-monomial} below, we give an account of the coefficients of the power series expansion of $\mathbb{B}_{\Omega_{n,s}}$ by computing the $L^2$-norms of monomials $e_\alpha(z)=z_1^{\alpha_1}\dots z_n^{\alpha_n}$. This shows that the coefficient of
\[ (z_1\ol{w_1})^{\alpha_1}\dots (z_n\ol{w_n})^{\alpha_n}\]
in the power series expansion of $\mathbb{B}_{\Omega_{n,s}}$ is a polynomial 
in $\alpha_1,\dots, \alpha_n$ only if $s=1$. For $2\leq s \leq n-1$,
the coefficient is a \emph{rational} function of $\alpha_1,\dots, \alpha_n$.
From this we are able to deduce the following:
\begin{thm}\label{thm-notrational}If $n\geq 3$, then 
$\mathbb{B}_{\Omega_{n,n-1}}$ is not a rational function. 
\end{thm}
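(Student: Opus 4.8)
\emph{Setup and strategy.} The argument rests on the standard orthogonal expansion of the Bergman kernel of a Reinhardt domain. Writing $t_a=z_a\ol{w_a}$, the monomials $e_\alpha$ are mutually orthogonal, so
\[
\mathbb{B}_{\Omega_{n,n-1}}(z,w)=\sum_{\alpha}\frac{1}{\norm{e_\alpha}^2}\,t^\alpha ,
\]
the sum ranging over all $\alpha$ with $e_\alpha\in L^2(\Omega_{n,n-1})$, the norms being exactly the quantities computed in Theorem~\ref{thm-monomial}. Set $F(t)=\sum_\alpha\norm{e_\alpha}^{-2}t^\alpha$. The plan is to reduce the theorem to the claim that $F$ is not rational, and then to kill $F$ on a single coordinate line. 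For the reduction, suppose $\mathbb{B}_{\Omega_{n,n-1}}=R(z,\ol w)$ for some rational function $R(\zeta,\eta)$ of $2n$ variables. Since the expansion depends on $(\zeta,\eta)$ only through the products $\zeta_a\eta_a$, we have $R(\zeta,\eta)=F(\zeta_1\eta_1,\dots,\zeta_n\eta_n)$; substituting $\eta_a=t_a/\zeta_a$ gives $F(t)=R(\zeta,t/\zeta)$ for all $\zeta$, and since the right-hand side is rational in $(\zeta,t)$ yet independent of $\zeta$, evaluating at $\zeta=(1,\dots,1)$ displays $F$ as a rational function of $t$. Thus it suffices to show that $F$ is not rational.

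\emph{Reduction to one variable.} Next I would restrict $F$ to the line $t_1=\dots=t_{n-1}=0$. One structural feature must be tracked: since $\Omega_{n,n-1}\cap\{z_n=0\}=\emptyset$, the coordinate $z_n$ is zero-free on the domain, so $F$ genuinely contains negative powers of $t_n$ (the Hartogs phenomenon). Along the chosen line only the exponents with $\alpha_1=\dots=\alpha_{n-1}=0$ survive, and for these $\alpha_n\geq -1$. Theorem~\ref{thm-monomial} gives $\norm{e_{(0,\dots,0,m)}}^2=\frac{\pi^n}{m+1}\big(1-(m+2)^{-(n-1)}\big)$ for $m\ge 0$, and a short simplification (writing $u=m+2$ and using $u^{\,n-1}-1=(u-1)S(u)$ with $S(u)=1+u+\dots+u^{\,n-2}$) yields the clean decomposition
\[
\frac{1}{\norm{e_{(0,\dots,0,m)}}^2}=\frac{1}{\pi^n}\left(m+1+\frac{1}{S(m+2)}\right),\qquad m\ge 0.
\]
Restriction of a rational function to a line on which it is not identically singular is again rational, so if $F$ were rational then $g(t_n):=\sum_{m\ge -1}\norm{e_{(0,\dots,0,m)}}^{-2}\,t_n^{m}$ would be a rational function of the single variable $t_n$.

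\emph{The contradiction.} It remains to see that $g$ is not rational. Dropping the lone $t_n^{-1}$ term and the manifestly rational contribution of the polynomial part $m+1$, rationality of $g$ would force $\sum_{m\ge 0}\frac{t_n^{m}}{S(m+2)}$ to be rational, i.e. its coefficient sequence $a_m=1/S(m+2)$ to be quasi-polynomial (a finite sum $\sum_i p_i(m)\beta_i^m$, equivalently a solution of a constant-coefficient linear recurrence). But $S(m+2)=1+(m+2)+\dots+(m+2)^{\,n-2}$ is a positive polynomial of degree $n-2\ge 1$, so $a_m$ is strictly positive and decays like $m^{-(n-2)}$; a quasi-polynomial that is $O(m^{-1})$ must vanish for all large $m$ (exponential terms are excluded by the polynomial rate of decay, and unit-modulus oscillatory terms because they do not tend to $0$), so $a_m$ is not quasi-polynomial. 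Hence $g$, and therefore $F$ and $\mathbb{B}_{\Omega_{n,n-1}}$, is not rational. For $n=3$ this is especially concrete: $S(m+2)=m+3$, and $\sum_{m\ge 0}\frac{t_n^{m}}{m+3}=t_n^{-3}\big(-\log(1-t_n)-t_n-\tfrac{t_n^{2}}{2}\big)$, whose logarithmic branch point at $t_n=1$ is already incompatible with rationality.

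\emph{Main obstacle.} The difficulty is not any single computation but the interplay of three points: making precise what ``rational function of the coordinates'' means and justifying the passage from rationality of $\mathbb{B}_{\Omega_{n,n-1}}$ to rationality of $F$; correctly accounting for the negative $t_n$-powers forced by $\Omega_{n,n-1}\cap\{z_n=0\}=\emptyset$; and choosing a slice on which the resulting one-variable coefficient is \emph{provably} non-polynomial. The last point is the genuine hazard, because on a badly chosen line the coefficient can collapse to a polynomial — which is exactly the signature $s=1$ situation, where the kernel is rational — so one must verify, as the decomposition above does, that a non-polynomial remainder $1/S(m+2)$ actually survives.
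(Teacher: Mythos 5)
Your argument is correct and is essentially the same as the paper's: you restrict the kernel to the axis $t_1=\dots=t_{n-1}=0$, split off the rational part $t_n^{-1}/(n-1)+(1-t_n)^{-2}$, and observe that the remaining series has positive coefficients decaying like $m^{-(n-2)}$, which is incompatible with the eventually-quasi-polynomial (hence exponentially decaying, once the coefficients tend to zero) Taylor coefficients of a rational function holomorphic on the disc. Your remainder coefficient $1/S(m+2)=(m+1)/\bigl((m+2)^{n-1}-1\bigr)$ is exactly the paper's $k/\bigl((k+1)^{n-1}-1\bigr)$ with $k=m+1$, so the only differences are cosmetic (the explicit justification of the passage from rationality in $(z,\overline{w})$ to rationality in $t$, and the $n=3$ logarithm remark).
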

It seems highly plausible that in fact $\mathbb{B}_{\Omega_{n,s}}$ is 
a transcendental function of the coordinates unless $s=1$, though at present we are not in possession of a complete proof. If this conjecture is correct, using 
the proper map from a model domain to an arbitrary elementary Reinhardt domain, it will follow that the Bergman kernel of an elementary Reinhardt domain of signature $s\geq 2$ is transcendental.

Some further properties of the series representation of the Bergman kernel are explored in section \ref{sec-bergmanremarks} below. 

\subsection{Acknowledgements} We gratefully acknowledge the helpful
comments of Doron Zielberger, Jeff McNeal and Luke Edholm. We also thank the anonymous referee for many excellent  
suggestions which led to significant improvements. 
\section{Preliminaries}
\subsection{Bergman theory} We briefly recall some
basic facts about Bergman spaces and kernels and clarify our notation.
An extensive modern exposition of this topic  from the complex analysis point of
view is \cite{krantzbergman}, and from the operator theory 
point of view is \cite{durenbergman}.

Let $\Omega\subset\cx^n$ be a domain, i.e. a connected open set. 
Then $A^2(\Omega)$, the ($L^2$)-Bergman space of $\Omega$, is the Hilbert space of holomorphic functions which are square integrable with respect to the Lebesgue measure $dV$.
This is a so-called \emph{reproducing kernel Hilbert space},
and its reproducing kernel is the \emph{Bergman kernel}, a function
$\mathbb{B}_\Omega:\Omega\times\Omega\to\cx$, holomorphic in the first and anti-holomorphic in the second input such that for 
each $f\in A^2(\Omega)$ we have for each $z\in \Omega$ the \emph{reproducing property}:
\[f(z)= \int_\Omega f(w) \mathbb{B}_\Omega(z,w)dV(w).\]

A domain  $\Omega\subset\cx^n$ is  \emph{Reinhardt} if whenever 
$z\in \Omega$, and $\lambda\in \mathbb{T}^n$, where $\mathbb{T}^n=\{\lambda\in \cx^n\mid \abs{\lambda_j}=1 \text{ for each } 1\leq j \leq n\}$ is the unit torus, we have $(\lambda_1z_1,\dots, \lambda_nz_n)\in \Omega. $ For a Reinhardt domain, there is a canonical series representation of the Bergman kernel. For each
multi-index $\alpha\in \mathbb{Z}^n$, let $e_\alpha$ denote 
the monomial 
\begin{equation}\label{eq-ealpha}
e_\alpha(z)=z^\alpha=z_1^{\alpha_1}\dots z_n^{\alpha_n}.    
\end{equation}
Then the Bergman kernel of 
$\Omega$ has the series representation converging uniformly on 
compact subsets of $\Omega\times \Omega$:
\begin{equation}
    \label{eq-bergmanseries}\mathbb{B}_\Omega(z,w)= \sum_{\alpha\in \mathbb{Z}^n} \frac{1}{\norm{e_\alpha}^2}z^\alpha \ol{w^\alpha},
\end{equation}
where 
\begin{equation}
    \label{eq-ealphanorm}\displaystyle{\norm{e_\alpha}^2=\int_\Omega \abs{e_\alpha(z)}^2 dV(z),}
\end{equation}
and if for an $\alpha\in \mathbb{Z}^n$ the integral
\eqref{eq-ealphanorm}
diverges, the coefficient $\dfrac{1}{\norm{e_\alpha}^2}$ in \eqref{eq-bergmanseries} is taken to be zero.  An immediate consequence of this series representation is the following simple observation: if $\widetilde{\Omega}\subset\cx^n$ is the 
domain $ \wt{\Omega}= \{(z_1\ol{w_1},\dots, z_n\ol{w_n})\mid z,w\in \Omega\},$
then there is a holomorphic function $\wt{B}$ on $\wt{\Omega}$ such that 
$ \mathbb{B}_\Omega(z,w) =\wt{B}(z_1\ol{w_1},\dots, z_n\ol{w_n}),$
where  for $t\in \wt{\Omega}$,
\begin{equation}
    \label{eq-btilde}
    \wt{B}(t)= \sum_{\alpha\in \mathbb{Z}^n} \frac{1}{\norm{e_\alpha}^2}t^\alpha.
\end{equation}
Therefore, the Bergman kernel of a Reinhardt domain $\Omega$ can be thought of as  
a holomorphic function on a different domain $\wt{\Omega}$, and this simplifies its study.

\subsection{Model domains as branched covers} 
 A map $\phi: \cx^n \to \cx^n$ will be said to be of the \emph{diagonal type} if there are 
positive integers $\ell_1,\dots, \ell_n$ such that 
\begin{equation}
    \label{eq-standard}
    \phi(z_1,\dots, z_n)=\left(z_1^{\ell_1},\dots, z_n^{\ell_n}\right). 
\end{equation}
\begin{prop} \label{prop-propermap} Let $n\geq 2$ and let $H$ be an elementary Reinhardt domain in 
$\cx^n$ of signature $1\leq s\leq n-1$. Then there is a proper holomorphic map of diagonal type from the model elementary domain $\Omega_{n,s}$ of \eqref{eq-Omegans} to $H$.
\end{prop}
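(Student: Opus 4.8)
The plan is to exhibit the proper map explicitly and verify its properties. Writing $K=\mathrm{lcm}(\abs{k_1},\dots,\abs{k_n})$ and $\ell_a=K/\abs{k_a}$ for $1\le a\le n$ — each a positive integer — I would consider the diagonal-type map
\[
\phi(z_1,\dots,z_n)=\bigl(z_1^{\ell_1},\dots,z_n^{\ell_n}\bigr),
\]
viewed first as a self-map of $\cx^n$, and claim that its restriction to $\Omega_{n,s}$ is a proper map onto $H=\mathscr{H}(k)$. The choice of exponents is the crux: it is made precisely so that $\ell_a\abs{k_a}=K$ for every $a$, which makes all the exponents appearing in the defining inequality of $H$ equal to the common value $K$.

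The key identity I would establish is $\Omega_{n,s}=\phi^{-1}(H)$ as subsets of $\cx^n$. Indeed, since each $\ell_a>0$, we have $\phi(z)\in\D^n$ iff $\abs{z_a}^{\ell_a}<1$ for all $a$ iff $z\in\D^n$. Next, with $w=\phi(z)$ one computes $\abs{w^k}=\prod_{a\le s}\abs{z_a}^{\ell_a k_a}\big/\prod_{b>s}\abs{z_b}^{\ell_b\abs{k_b}}$, and because $\ell_a\abs{k_a}=K$ this equals $\bigl(\prod_{a\le s}\abs{z_a}\big/\prod_{b>s}\abs{z_b}\bigr)^{K}$. As $t\mapsto t^{K}$ is strictly increasing on $[0,\infty)$, the inequality $\abs{w^k}<1$ is equivalent to $\prod_{a\le s}\abs{z_a}<\prod_{b>s}\abs{z_b}$, i.e.\ to the defining inequality of $\Omega_{n,s}$; one checks along the way that both conditions force $z_{s+1},\dots,z_n\ne0$, so the ``$z^k$ is defined'' clauses match up as well. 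This gives the set equality and, in particular, shows $\phi$ maps $\Omega_{n,s}$ holomorphically into $H$.

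For properness I would first observe that $\phi\colon\cx^n\to\cx^n$ is itself proper: it is continuous, and since $\abs{z_a}=\abs{\phi(z)_a}^{1/\ell_a}$ the preimage of a bounded set is bounded, hence (being closed) compact. Properness is preserved under restriction to a preimage: if $C\subset H$ is compact, then $\phi^{-1}(C)\subset\phi^{-1}(H)=\Omega_{n,s}$, so $(\phi|_{\Omega_{n,s}})^{-1}(C)=\phi^{-1}(C)$ is compact. Hence $\phi|_{\Omega_{n,s}}\colon\Omega_{n,s}\to H$ is a proper holomorphic map of diagonal type, as required; since a proper holomorphic map between equidimensional domains has open and closed image and $H$ is connected, it is automatically surjective, so $\Omega_{n,s}$ genuinely covers $H$.

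I expect the only delicate point to be the bookkeeping at the degenerate part of the boundary — the loci where some coordinate vanishes — to confirm that the two ``defined'' conditions and the limiting (non-strict) inequalities line up correctly; this is exactly what is packaged cleanly into the single identity $\Omega_{n,s}=\phi^{-1}(H)$, after which properness becomes formal. As a sanity check the construction recovers the familiar fact that the generalized Hartogs triangle $\mathscr{H}(m,-n)$ is covered by the classical one $\Omega_{2,1}=\mathscr{H}(1,-1)$ via $(z_1,z_2)\mapsto(z_1^{n},z_2^{m})$.
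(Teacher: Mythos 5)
Your proposal is correct and follows essentially the same route as the paper: the same diagonal map $\phi(z)=(z_1^{\ell_1},\dots,z_n^{\ell_n})$ with $\ell_a=K/\abs{k_a}$, the same key identity $\phi^{-1}(H)=\Omega_{n,s}$ obtained from $\phi(z)^k=(z_1\cdots z_s)^K(z_{s+1}\cdots z_n)^{-K}$, and the same deduction of properness from that identity. You simply spell out a few steps the paper leaves implicit (properness of $\phi$ on all of $\cx^n$ and the restriction-to-a-preimage argument).
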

\begin{proof} Let $k=(k_1,\dots,k_s, -k_{s+1},\dots, -k_n)$ be the multi-index such that $H=\mathscr{H}(k)$.
Let us set
$ K= \mathrm{lcm}(k_1,\dots, k_n),$
and let $\ell_j = \dfrac{K}{k_j}.$ Define the map $\phi$ by \eqref{eq-standard}. Then $\phi$ defines a proper holomorphic map from $\cx^n$ to itself.
To show that $\phi$ restricts to a proper map from $\Omega_{n,s}$ 
to $H$, it suffices to show that $\phi^{-1}(H)=\Omega_{n,s}.$ Indeed, if $z\in \cx^n$ is such that $\phi(z)\in H$, then we have $\abs{\phi(z)^k}<1.$
But since
\[ \phi(z)^k =(z_1^{\ell_1})^{k_1}\dots(z_s^{\ell_s})^{k_s} (z_{s+1}^{\ell_{s+1}})^{-k_{s+1}} \dots (z_{n}^{\ell_{n}})^{-k_{n}}=(z_1\dots z_s)^K(z_{s+1}\dots z_{n})^{-K}, \]
it follows that $z\in \Omega_{n,s}$ and the result follows. 
 \end{proof}

\begin{defn}
Let $H$ be an elementary Reinhardt domain in $\cx^n$ of signature $s$. The map $\phi:\Omega_{n,s}\to H$ given by \eqref{eq-standard} will be referred to
as the \emph{standard proper map} associated with $H$.
\end{defn}
Note that there may also be proper holomorphic maps from $\Omega_{n,s}$  to $H$ different from the standard map. And for certain elementary Reinhardt domains, biholomorphic maps can even be found. For example, the map from $\Omega_{2,1}=\{\abs{z_1}<\abs{z_2}<1\}\subset\cx^2$ to 
$\mathscr{H}(m,-n)=\{\abs{z_1}^{\frac{m}{n}}<\abs{z_2}<1\}\subset\cx^2 $ given by $(z_1, z_2)\to (z_1z_2^{n-1}, z_2^m)$ is a proper holomorphic map different from the standard map, and a biholomorphism if and only if $m=1$.
\section{Norms of monomials}

In the following theorem, we describe the coefficients of the series expansion
\eqref{eq-bergmanseries} of the Bergman kernel of an elementary Reinhardt domain.

\begin{thm}\label{thm-monomial}
Let $n\geq 2$, let $1\leq s\leq n-1$, and let $\alpha\in \mathbb{Z}^n$. Let $\beta\in \mathbb{Z}^n$ be the multi-index $(\beta_1,\dots, \beta_n)$ such that 
\[ \beta_j=\alpha_j+1.\]
Then, on the model domain $\Omega_{n,s}$, we have
\begin{enumerate}
    \item $\norm{e_\alpha}^2_{\Omega_{n,s}} <\infty$ if and only if
\begin{equation}
    \label{eq-somegank}
    \beta_j>0\quad \text{and} \quad \beta_j+\beta_{\ell}>0 \quad \text{for $1\leq j\leq s$, and $s+1\leq \ell \leq n$. }
\end{equation} 
\item if $\alpha$ is such that $\norm{e_\alpha}^2_{\Omega_{n,s}} <\infty$ we have that
\begin{equation}
    \label{eq-RSdef}\norm{e_\alpha}^2_{\Omega_{n,s}} = \pi^n\cdot \frac{R_{n,s}(\beta)}{S_{n,s}(\beta)}, 
\end{equation}
where $R,S$ are homogeneous polynomials in $n$ variables with integer coefficients, with
\begin{equation}\label{eq-pmkhyp}
S_{n,s}(\beta)= \prod_{j=1}^s \beta_j \prod_{\substack{1\leq j \leq s\\ s+1 \leq \ell \leq n}}(\beta_j+\beta_\ell),\end{equation}
and $R_{n,s}$ is a homogeneous polynomial of total degree  $(n-s)(s-1)$ such that $R_{n,s}$ and $S_{n,s}$ have no common factors. We further have $R_{n,1}=1.$
\end{enumerate}
\end{thm}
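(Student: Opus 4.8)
The plan is to reduce the computation of $\norm{e_\alpha}^2_{\Omega_{n,s}}$ to a real iterated integral and to evaluate it by integrating the ``positive'' variables $u_1,\dots,u_s$ against the defining constraint first, and the ``negative'' variables last. Writing $z_j=r_je^{i\theta_j}$ and using that $\abs{e_\alpha(z)}^2=\prod_j r_j^{2\alpha_j}$ is independent of the angles, the angular integration contributes a factor $(2\pi)^n$, and the substitution $u_j=r_j^2$ a factor $2^{-n}$, so that with $\beta_j=\alpha_j+1$,
\[ \norm{e_\alpha}^2_{\Omega_{n,s}} = \pi^n \int_U \prod_{j=1}^n u_j^{\beta_j-1}\, du, \qquad U = \Big\{u\in(0,1)^n : \prod_{j=1}^s u_j < \prod_{\ell=s+1}^n u_\ell\Big\}. \]
Everything then reduces to evaluating this integral and determining when it converges.

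First I would fix the negative variables, set $P=u_{s+1}\cdots u_n\in(0,1)$, and study the constrained integral $J_s(P)=\int_{\{u\in(0,1)^s:\,u_1\cdots u_s<P\}} \prod_{j=1}^s u_j^{\beta_j-1}\,du$. Isolating the last positive variable and splitting the range of $u_s$ according to whether the constraint on $u_1\cdots u_{s-1}$ is active (that is, whether $u_s\le P$ or $u_s>P$) yields, for $\beta_1,\dots,\beta_s>0$, the recursion
\[ J_s(P) = \frac{P^{\beta_s}}{\beta_s}\prod_{j=1}^{s-1}\frac{1}{\beta_j} \;+\; \int_P^1 u_s^{\beta_s-1}\, J_{s-1}\!\left(\tfrac{P}{u_s}\right) du_s, \]
with base case $J_1(P)=P^{\beta_1}/\beta_1$. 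Solving this by induction gives the closed form $J_s(P)=(-1)^{s-1}\sum_{j=1}^s P^{\beta_j}\big/\big(\beta_j\prod_{i\neq j}(\beta_i-\beta_j)\big)$, a divided difference of $P^{\beta}/\beta$ at the nodes $\beta_1,\dots,\beta_s$; in particular it is symmetric in $\beta_1,\dots,\beta_s$ and, despite appearances, is regular where two nodes coincide. Establishing this formula and the attendant convergence condition $\beta_j>0$ is the first computational core of the argument.

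Since each term of $J_s$ is a pure power of $P=u_{s+1}\cdots u_n$, integrating $J_s(u_{s+1}\cdots u_n)$ against $\prod_{\ell} u_\ell^{\beta_\ell-1}$ over the cube factors completely: the $j$-th term contributes $\prod_{\ell=s+1}^n\int_0^1 u_\ell^{\beta_\ell+\beta_j-1}\,du_\ell=\prod_\ell(\beta_\ell+\beta_j)^{-1}$, which converges exactly when $\beta_\ell+\beta_j>0$. This simultaneously produces the convergence criterion \eqref{eq-somegank} (the conditions $\beta_j>0$ coming from $J_s$ and $\beta_j+\beta_\ell>0$ from these last integrations, which one then verifies to be necessary as well) and the explicit evaluation
\[ \norm{e_\alpha}^2_{\Omega_{n,s}} = \pi^n (-1)^{s-1}\sum_{j=1}^s \frac{1}{\beta_j\,\prod_{\substack{1\le i\le s\\ i\neq j}}(\beta_i-\beta_j)}\prod_{\ell=s+1}^n \frac{1}{\beta_\ell+\beta_j}. \]

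It remains to rewrite this sum in the form $R_{n,s}(\beta)/S_{n,s}(\beta)$ of \eqref{eq-RSdef}. The only possible poles lie on the hyperplanes $\beta_j=0$, $\beta_j+\beta_\ell=0$, and $\beta_i=\beta_j$; by the symmetry in $\beta_1,\dots,\beta_s$ the residues of the two relevant terms along each $\beta_i=\beta_j$ are opposite, so those apparent poles cancel and the sum is genuinely $R_{n,s}/S_{n,s}$ with $S_{n,s}$ as in \eqref{eq-pmkhyp}. Each summand is homogeneous of degree $-n$, hence so is $R_{n,s}/S_{n,s}$, which forces $\deg R_{n,s}=\deg S_{n,s}-n=s(n-s+1)-n=(n-s)(s-1)$. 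Coprimality of $R_{n,s}$ and $S_{n,s}$ follows by exhibiting a nonzero residue along each irreducible factor of $S_{n,s}$: for instance, as $\beta_1\to 0$ only the $j=1$ term is singular, with nonzero residue, so $\beta_1\nmid R_{n,s}$, and similarly along $\beta_j+\beta_\ell=0$. For $s=1$ the sum reduces to the single term $\beta_1^{-1}\prod_\ell(\beta_\ell+\beta_1)^{-1}$, so $R_{n,1}=1$. I expect the main obstacle to be twofold: deriving the closed form for $J_s(P)$ from the recursion, including the bookkeeping of the induction and the removable-singularity phenomenon; and the passage from the partial-fraction sum to $R_{n,s}/S_{n,s}$, in particular verifying that the spurious factors $\beta_i-\beta_j$ cancel and that no genuine factor of $S_{n,s}$ divides $R_{n,s}$.
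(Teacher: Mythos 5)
Your route is genuinely different from the paper's. The paper fixes $s$ and inducts on $n$: integrating out the last (negative) variable yields the recursion $\mathcal{D}_{n+1,s}(\beta,\beta_{n+1})=\frac{1}{\beta_{n+1}}\left(\mathcal{D}_{n,s}(\beta)-\mathcal{D}_{n,s}(\beta^*)\right)$, where $\beta^*$ shifts the first $s$ entries up and the last $n-s$ entries down by $\beta_{n+1}$, and then homogeneity, the degree count $(n-s)(s-1)$, integrality, and coprimality of $R_{n,s}$ and $S_{n,s}$ are all propagated through the induction; no closed form for $R_{n,s}$ is ever produced. You instead integrate the $s$ positive variables first against the constraint, obtaining a closed form for $J_s(P)$ as a divided-difference-type sum, and then the negative variables factor term by term. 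Your approach buys an explicit formula for $R_{n,s}/S_{n,s}$ (and makes the symmetry in $\beta_1,\dots,\beta_s$ and in $\beta_{s+1},\dots,\beta_n$ manifest, which the paper has to invoke separately in its coprimality argument); the paper's induction avoids the confluent-node issue entirely and gets integrality of the coefficients of $R_{n+1,s}$ for free from the fact that the divisor $\beta_{n+1}$ is monic. Both are valid.

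One concrete error to fix: your closed form for $J_s(P)$ carries an extraneous $(-1)^{s-1}$. Solving your (correct) recursion for $s=2$ gives $J_2(P)=\frac{P^{\beta_1}}{\beta_1(\beta_2-\beta_1)}+\frac{P^{\beta_2}}{\beta_2(\beta_1-\beta_2)}$, i.e. $J_s(P)=\sum_{j=1}^s P^{\beta_j}\big/\big(\beta_j\prod_{i\neq j}(\beta_i-\beta_j)\big)$ with no sign prefactor (equivalently, $(-1)^{s-1}$ times the divided difference of $P^x/x$, not the divided difference itself). As written, your final formula gives a \emph{negative} value for $\norm{e_\alpha}^2$ when $s=2$; e.g.\ for $n=3$, $s=2$ the correct answer is $\pi^3(\beta_1+\beta_2+\beta_3)/\big(\beta_1\beta_2(\beta_1+\beta_3)(\beta_2+\beta_3)\big)$, which your expression produces only after deleting the $(-1)^{s-1}$. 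Two further points need more than a gesture: the confluent case $\beta_i=\beta_j$ must be handled by a limiting or symmetric-function argument before the term-by-term integration over the negative variables (the individual terms blow up there), and the integrality of the coefficients of $R_{n,s}$ --- which the theorem asserts --- does not follow automatically from your partial-fraction sum and needs an argument (e.g.\ that divided differences of monomials at the nodes $\beta_1,\dots,\beta_s$ are complete homogeneous symmetric polynomials with integer coefficients).
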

Recall that the total degree of a monomial is the sum of exponents of each of the variables. 

\begin{proof}
Fix an $s\geq 1$, and we prove this by induction on $n$. We will start with the base case of $n=s$, which is not part of the statement of the theorem as stated, but for which the result also holds.
Denote by $\mathbb{D}^s$ the unit polydisc $\{\abs{z_j}<1, j=1,\dots, s\}$ in $\mathbb{C}^s$. Notice that  
\[ \Omega_{s,s}= \{z\in \mathbb{D}^s\mid\abs{z_1z_2\dots z_n}<1\}= \mathbb{D}^s. \]
In this case we have for $\alpha\in \mathbb{Z}^s$ by direct
computation that $\norm{e_\alpha}^2 <\infty$ if and only if
\[  \beta_j=\alpha_j+1>0,\quad j=1,\dots, s,\]
and for such $\alpha$
\begin{equation}\label{eq-polydisccoeftt}
\norm{e_\alpha}^2_{\mathbb{D}^s}= \pi^{s}\frac{1}{(\alpha_{1}+1)\dots(\alpha_{s}+1)}=\pi^{s}\frac{1}{
\beta_1\dots \beta_s}. \end{equation}
Therefore \eqref{eq-somegank} is satisfied, and if we take $R_{s,s}=1$ and $S_{s,s}=\beta_1\dots \beta_s $
then  \eqref{eq-pmkhyp} is satisfied, and $R_{s,s}$ does have
degree $(n-s)(s-1)=(s-s)(s-1)=0$, as needed.

We now proceed by induction. Assume the result is true for some 
$n\geq s$. For simplicity of notation, let $\mathbf{1}=(1,1,\dots, 1),$ then we set 
\begin{equation} \label{eq-d}
     \mathcal{D}_{n,s}(\beta) = \frac{1}{\pi^n}\norm{e_{\beta -\mathbf{1}}}^2_{\Omega_{n,s}},
 \end{equation}
and for $\beta_{n+1}\in \mathbb{Z}$ denote by $(\beta,\beta_{n+1})\in \mathbb{Z}^{n+1}$ the multi-index \[
(\beta, \beta_{n+1})=(\beta_1,\dots\beta_n,\beta_{n+1}).\]
To abbreviate the formulas that follow, let $\beta^*\in \mathbb{Z}^n$ be the multi-index given by 
\begin{equation}
    \label{eq-betastar}
     \beta_j^*=
    \begin{cases}
   \beta_j+\beta_{n+1} & \text{ if $1\leq j \leq s$}\\
   \beta_j-\beta_{n+1} & \text{ if $s+1 \leq j \leq n$.}
   \end{cases}
\end{equation}
Notice that $\beta^*$ actually depends on $\beta\in \mathbb{Z}^n$ and $\beta_{n+1}\in \mathbb{Z}$, though this has 
been suppressed from the notation.   We claim that $\mathcal{D}_{n+1,k}(\beta,\beta_{n+1})$ can be represented as follows
 \begin{equation}\label{eq-drecursion}
     \mathcal{D}_{n+1,s}(\beta,\beta_{n+1}) = \frac{1}{\beta_{n+1}}(\mathcal{D}_{n,s}(\beta)-\mathcal{D}_{n,s}(\beta^*)).
 \end{equation}
We postpone the proof of the claim to proceed with the induction.
Note that $\norm{e_{\beta - \mathbf{1}}}<\infty$ is equivalent to $\mathcal{D}_{n,s}(\beta)<\infty$.  
Let $(\beta, \beta_{n+1})\in \mathbb{Z}^n$. From \eqref{eq-drecursion},
it follows that $ \mathcal{D}_{n+1,s}(\beta,\beta_{n+1})<\infty$
if and only if $\mathcal{D}_{n,s}(\beta)<\infty$ and $\mathcal{D}_{n,s}(\beta^*)<\infty$, since each of $\mathcal{D}_{n,s}(\beta)$ 
and $\mathcal{D}_{n,s}(\beta^*)$ is strictly positive. From $\mathcal{D}_{n,s}(\beta)<\infty$, using the induction hypothesis, we see that the conditions \eqref{eq-somegank} hold. From $\mathcal{D}_{n,s}(\beta^*)<\infty$ we get the conditions
\[ \beta^*_j>0\quad \text{and} \quad \beta^*_j+\beta^*_{\ell}>0 \quad \text{for $1\leq j\leq s$, and $s+1\leq \ell \leq n$, } \]
which, using the definition of $\beta_j^*$ in \eqref{eq-betastar} becomes
\begin{equation}
    \label{eq-somegankstar}
    \beta_j+\beta_{n+1}>0, \quad \text{and} \quad \beta_j+\beta_\ell>0, 
    \quad 1 \leq j\leq s,\text{ and } s+1\leq \ell \leq n+1.
\end{equation}
Now \eqref{eq-somegank} and \eqref{eq-somegankstar} together imply that 
the conclusion (1) of the theorem we are proving holds for $n+1$, provided it holds for $n$.

Assuming now that  $ \mathcal{D}_{n+1,s}(\beta,\beta_{n+1})<\infty$,
by \eqref{eq-drecursion}, and the induction hypothesis,  we have that 
\begin{align}
        \mathcal{D}_{n+1,s}(\beta,\beta_{n+1}) &= \frac{1}{\beta_{n+1}}\left(\frac{R_{n,s}(\beta)}{S_{n,s}(\beta)} - \frac{R_{n,s}(\beta^*)}{S_{n,s}(\beta^*)}\right)\nonumber\\
        &= \frac{1}{\beta_{n+1}}\left(\frac{R_{n,s}(\beta)S_{n,s}(\beta^*) - R_{n,s}(\beta^*)S_{n,s}(\beta)}{S_{n,s}(\beta)S_{n,s}(\beta^*)}\right).\label{eq-gamma}
    \end{align}
Using the definition \eqref{eq-betastar} of $\beta^*$, we have
   \begin{align}
       S_{n,s}(\beta^*)&= \prod_{j=1}^s \beta_j^* \prod_{\substack{1\leq j \leq s\\ s+1 \leq \ell \leq n}}(\beta^*_j+\beta^*_\ell)\nonumber
       = \prod_{j=1}^s (\beta_j+\beta_{n+1}) \prod_{\substack{1\leq j \leq s\\ s+1 \leq \ell \leq n}}(\beta_j+\beta_\ell)\nonumber \\
       &=\prod_{\substack{1\leq j \leq s\\ s+1 \leq \ell \leq n+1}}(\beta_j+\beta_\ell). \label{eq-alpha}
   \end{align}
   Therefore using \eqref{eq-pmkhyp} and \eqref{eq-alpha}:
   \begin{align}
       S_{n,s}(\beta)S_{n,s}(\beta^*)=  &\prod_{j=1}^s \beta_j \prod_{\substack{1\leq j \leq s\\ s+1 \leq \ell \leq n}}(\beta_j+\beta_\ell) \prod_{\substack{1\leq j \leq s\\ s+1 \leq \ell \leq n+1}}(\beta_j+\beta_\ell)\nonumber\\
       = &S_{n+1,s}(\beta, \beta_{n+1})\cdot \prod_{\substack{1\leq j \leq s\\ s+1 \leq \ell \leq n}}(\beta_j+\beta_\ell), 
       \label{eq-beta}
   \end{align}
   where $S_{n+1,s}(\beta, \beta_{n+1})$ is as in \eqref{eq-pmkhyp}.  The expression in the numerator of \eqref{eq-gamma} is given, using \eqref{eq-alpha} and \eqref{eq-pmkhyp} by
   \begin{align}
      &R_{n,s}(\beta)S_{n,s}(\beta^*) - R_{n,s}(\beta^*)S_{n,s}(\beta)\nonumber\\
      &=  R_{n,s}(\beta)\cdot \prod_{\substack{1\leq j \leq s\\ s+1 \leq \ell \leq n+1}}(\beta_j+\beta_\ell)  - R_{n,s}(\beta^*)\cdot\prod_{j=1}^s \beta_j \prod_{\substack{1\leq j \leq s\\ s+1 \leq \ell \leq n}}(\beta_j+\beta_\ell) \nonumber\\
      &=\left( R_{n,s}(\beta)\cdot \prod_{j=1}^s (\beta_j+\beta_{n+1}) - R_{n,s}(\beta^*)\cdot \prod_{j=1}^s \beta_j \right)\cdot\prod_{\substack{1\leq j \leq s\\ s+1 \leq \ell \leq n}}(\beta_j+\beta_\ell). \label{eq-delta}
   \end{align}
     Using \eqref{eq-beta} and \eqref{eq-delta} in \eqref{eq-gamma}, we see that the numerator and denominator of \eqref{eq-gamma} share the
   common factor $\prod_{\substack{1\leq j \leq s\\ s+1 \leq \ell \leq n}}(\beta_j+\beta_\ell)$.  Removing this common factor we see that 
   \[\mathcal{D}_{n+1}(\beta, \beta_{n+1})= \frac{f(\beta_{n+1})/{(\beta_{n+1})}}{S_{n+1,s}(\beta, \beta_{n+1})},\]
   where we now think of $(\beta_1,\dots, \beta_{n+1})$ as indeterminates, and $f$ as a polynomial in the ring 
   $\mathbb{Q}(\beta_1,\dots, \beta_{n})[\beta_{n+1}]$ of polynomials in the indeterminate $\beta_{n+1}$ over the field
   of rational functions $\mathbb{Q}(\beta_1,\dots, \beta_{n})$ in $n$ indeterminates, with $f$ given by
   \[f(\beta_{n+1})= R_{n,s}(\beta)\cdot \prod_{j=1}^s (\beta_j+\beta_{n+1}) - R_{n,s}(\beta^*)\cdot \prod_{j=1}^s \beta_j. \]
   Now the formulas \eqref{eq-betastar} defining $\beta^*$ in terms of $\beta_1, \dots, \beta_{n+1}$ show that if $\beta_{n+1}=0$,
   then $\beta^*=\beta$. It now follows that $f(0)=0$, so that \[ R_{n+1,s}(\beta, \beta_{n+1})=f(\beta_{n+1})/{\beta_{n+1},} \] is a polynomial in the ring $\mathbb{Q}(\beta_1,\dots, \beta_{n})$. But noting further that $f\in \mathbb{Z}[\beta_1,\dots, \beta_{n+1}]$,
   and the divisor $\beta_{n+1}$ has leading coefficient 1, we see that in fact $R_{n+1,s}\in \mathbb{Z}[\beta_1,\dots, \beta_{n+1}]$
   which we wanted to prove. We therefore have  the recursive formula:
   
\begin{equation}
\label{eq-Rrecursive}
   R_{n+1,s}(\beta, \beta_{n+1}) =\frac{1}{\beta_{n+1}}\cdot \left( R_{n,s}(\beta)\cdot \prod_{j=1}^s (\beta_j+\beta_{n+1}) - R_{n,s}(\beta^*)\cdot \prod_{j=1}^s \beta_j \right).
\end{equation}
By the induction hypothesis, $R_{n,s}$ is a homogeneous polynomial in the $n$ variables $\beta_1,\dots, \beta_n$ of total degree $(n-s)(s-1)$. By the definition \eqref{eq-betastar} of 
$\beta^*$, we see that $R_{n,s}(\beta^*)$ is a also homogeneous polynomial 
of the $n+1$ variables $\beta_1,\dots, \beta_{n+1}.$ The quantity 
in large parentheses in \eqref{eq-Rrecursive} is therefore the difference 
of two homogeneous polynomials of total degree $(n-s)(s-1)+s$. It is therefore either zero, or itself a homogeneous polynomial of degree $(n-s)(s-1)+s$. But it cannot be zero, since then the norm of a monomial is zero, which is absurd. 
 Finally by \eqref{eq-Rrecursive}, the polynomial $R_{n+1,s}$ is also homogeneous, being the ratio of two homogeneous polynomials, and has total degree
 \[ (n-s)(s-1)+s-1= \left((n+1)-s\right)(s-1).\]
We will now show that $R_{n+1,s}(\beta)$ and $S_{n+1,s}(\beta)$ have no common factors.

By induction hypothesis  $S_{n,s}(\beta)$ has no common factors with $R_{n,s}(\beta)$. Since $S_{n,s}(\beta)$ is a product of  linear factors $\beta_{j}$ and $(\beta_{j} + \beta_{\ell})$ where $1 \leq j \leq s$, and $s+1 \leq \ell \leq n$, none of these factors divide $R_{n,s}(\beta)$.

From the symmetry of $\Omega_{n+1,s}$, the definition \eqref{eq-RSdef}, and the symmetry of $S_{n+1,s}$ we know that $R_{n+1,s}(\beta,\beta_n+1)$ is symmetric in variables $\beta_{1}, \dots, \beta_{s}$ and variables $\beta_{s+1}, \dots, \beta_{n+1}$. Starting from these facts, we can verify that none of the linear factors of $S_{n+1,s}$ divides the right hand side of \eqref{eq-Rrecursive}, by noting that even if these linear factors vanish, the right hand side of \eqref{eq-Rrecursive} does not.  Hence, $R_{n+1, s}(\beta , \beta_{n+1})$ and $S_{n+1,s}(\beta , \beta_{n+1})$ have no common factors.

Therefore, the inductive proof of the Theorem is complete, except that 
we need to establish the claim \eqref{eq-drecursion} on which the above 
 induction was based. Note that from \eqref{eq-d}, with $\mathbf{1}=(1,\dots,1)$, we have
 \[\mathcal{D}_{n,s}(\beta)=\frac{1}{\pi^n} \norm{e_{\beta-\mathbf{1}}}^2 = \frac{1}{\pi^n} \int_{\Omega_{n,s}}\abs{e_{\beta-\mathbf{1}}(z)}^2 dV(z). \]
 Using polar coordinates $z_j=r_je^{i\theta_j}$ and using the fact that 
 $dV(z)= \prod_{j=1}^n r_j dr_jd\theta_j =r^{\mathbf{1}}dV(r)dV(\theta)$, where $r=(r_1,\dots,r_n)$, we have
 \[\mathcal{D}_{n,s}(\beta)= \frac{1}{\pi^n}\cdot (2\pi)^n \int_{\abs{\Omega_{n,s}}} r^{2\beta-\mathbf{1}}dV(r), \]
 where $\abs{\Omega_{n,s}}\subset \rl^n$ is the Reinhardt shadow of $\Omega_{n,s}$, i.e.,
 the image of $\Omega_{n,s}$ under the map $z\mapsto (\abs{z_1}, \dots, \abs{z_n})$. We will make the further change of variables  $t_j=r_j^2$, which maps $\abs{\Omega_{n,s}}$ diffeomorphically to itself. The integral now takes the form:
 \[\mathcal{D}_{n,s}(\beta)= \frac{1}{\pi^n}\cdot (2\pi)^n \int_{\abs{\Omega_{n,s}}} t^{\beta-\mathbf{1}}dV(t).  \]
 We will transform this integral into an $n$-fold repeated integral.
For simplicity of notation we  denote   repeated integrals with differential in front and integrand after that, so that
\[ \int_{x_2=a_2}^{b_2} g(x_2)\left(\int_{x_1=a_1}^{b_1} f(x_1, x_2) dx_1\right)dx_2=
\int_{a_2}^{b_2} dx_2 \cdot g(x_2)\int_{a_1}^{b_1} dx_1\cdot f(x_1, x_2),\]
and adopt similar notations for multiple repeated integrals, so that the innermost integral in the conventional notation  is  the rightmost factor. The region of integration over which $t\in \rl^n$ ranges is described by the inequalities
\[ 0 \leq t_{1}\dots t_{s}<t_{s+1}\dots t_{n} < 1,\, 0\leq  t_{1}<1, \dots, 0\leq t_{n} < 1.\]
Then, $\mathcal{D}_{n,s}(\beta)$  can be expressed explicitly by the following $n$-fold integral:
\[ \int_0^1dt_1\cdot t_1^{\beta_1 - 1}\int_0^1 dt_2\cdot t_2^{\beta_2-1}\dots\int_0^1dt_s\cdot t_s^{\beta_s-1}\int_{t_1\dots t_s}^1 dt_{s+1}\cdot t_{s+1}^{\beta_{s+1}-1}\int_{\frac{t_1\dots t_s}{t_{s+1}}}^1 dt_{s+2}\cdot t_{s+2}^{\beta_{s+2}-1}\dots\int_{\frac{t_1\dots t_s}{t_{s+1}\dots t_{n-1}}}^1 dt_n\cdot t_{n}^{\beta_{n}-1}.\]

Similarly, 
\begin{align*}
    \mathcal{D}_{n+1,s}(\beta,\beta_{n+1}) &= \int_0^1dt_1 \cdot t_1^{\beta_1-1}\cdots\int_0^1 dt_s t_s^{\beta_s-1}\int_{t_1\dots t_s}^1dt_{s+1}t_{s+1}^{\beta_{s+1}-1}\int_{\frac{t_1\dots t_s}{t_{s+1}}}^1dt_{s+2} t_{s+2}^{\beta_{s+2}-1}\cdots\\
&\cdots\int_{\frac{t_1\dots t_s}{t_{s+1}\dots t_{n-1}}}^1dt_nt_{n}^{\beta_{n}-1}\int_{\frac{t_1\dots t_s}{t_{s+1}\dots t_{n}}}^1dt_{n+1}t_{n+1}^{\beta_{n+1}-1}\\
&=\int_0^1dt_1  t_1^{\beta_1-1}\cdots\int_0^1dt_s t_s^{\beta_s-1}\int_{t_1\dots t_s}^1dt_{s+1}t_{s+1}^{\beta_{s+1}-1}\int_{\frac{t_1\cdot t_s}{t_{s+1}}}^1dt_{s+2}\cdot t_{s+2}^{\beta_{s+2}-1}\cdots \\
&\cdots\int_{\frac{t_1\dots t_s}{t_{s+1}\dots t_{n-1}}}^1dt_n\cdot\frac{1}{\beta_{n+1}}
\left(1- \left(\frac{t_1\dots t_s}{t_{s+1}\dots t_{n}}\right)^{\beta_{n+1}}\right),\\
&\text{(where we have evaluated the innermost integral)}\\
&= \frac{1}{\beta_{n+1}}\left(\mathcal{D}_{n,s}(\beta) - \vphantom{\int_{\frac{t_1\dots t_s}{t_{s+1}\dots t_{n-1}}}^1 dt_n\cdot t_{n}^{\beta_{n}-\beta_{n+1}-1}}\right.\\
&\int_0^1 dt_1\cdot t_1^{\beta_1+\beta_{n+1}-1}\cdots\int_0^1dt_s t_s^{\beta_s+\beta_{n+1}-1}
\int_{t_1\dots t_s}^1 dt_{s+1}t_{s+1}^{\beta_{s+1}-\beta_{n+1}-1}\int_{\frac{t_1\dots t_s}{t_{s+1}}}^1dt_{s+2} t_{s+2}^{\beta_{s+2}-\beta_{n+1}-1}\\
&\left.\cdots\int_{\frac{t_1\dots t_s}{t_{s+1}\dots t_{n-1}}}^1 dt_n\cdot t_{n}^{\beta_{n}-\beta_{n+1}-1}\right)\\
&= \frac{1}{\beta_{n+1}}\left( \mathcal{D}_{n,s}(\beta)- \mathcal{D}_{n,s}(\beta^*)\right),
\end{align*}
which completes the proof of \eqref{eq-drecursion}.
\end{proof}

\subsection{Proof of Theorem~\ref{thm-notrational}}
From \eqref{eq-bergmanseries}, we may write
\[ \mathbb{B}_{\Omega_{n,n-1}}(z,w)= \sum_{\beta\in \mathcal{T}} \frac{1}{\norm{e_{\beta-\mathbf{1}}}^2} t^{\beta - \mathbf{1}}, \]
where $\mathbf{1}=(1,\dots, 1)$ and  $\mathcal{T}$ is the set of indices corresponding to $s=n-1$ in \eqref{eq-somegank}, i.e.
\[ \beta_j>0, \beta_j+\beta_n>0, \text{ for } 1 \leq j \leq n-1.\]
Furthermore, we have from Theorem~\ref{thm-monomial} that 
\[ \norm{e_{\beta-\mathbf{1}}}^2 =  \pi^n \frac{R_{n, n-1}(\beta)}{S_{n, n-1}(\beta)},\]
where by \eqref{eq-pmkhyp}, we have
\[ S_{n,n-1}(\beta)=\prod_{j=1}^{n-1}\beta_j \prod_{j=1}^{n-1} (\beta_j+\beta_n) = \prod_{j=1}^{n-1} \beta_j(\beta_j+\beta_n), \]
and using the recursive relation \eqref{eq-Rrecursive} and the fact that 
$R_{n,n}\equiv 1$  (see \eqref{eq-polydisccoeftt}), we see that
\[ R_{n,n-1}(\beta)= \frac{1}{\beta_n} \left( \prod_{j=1}^{n-1} (\beta_j+\beta_n) - \prod_{j=1}^{n-1}\beta_j \right).\]
Therefore, with $t_j=z_j \ol{w_j}$, we have 
\begin{align}
    \mathbb{B}_{\Omega_{n,n-1}}(z,w)&=\wt{B}(t_1,\dots, t_n)\nonumber\\
    &= \frac{1}{\pi^n}\sum_{\beta\in \mathcal{T}} \frac{\prod_{j=1}^{n-1} \beta_j(\beta_j+\beta_n)}{\frac{1}{\beta_n} \left( \prod_{j=1}^{n-1} (\beta_j+\beta_n) - \prod_{j=1}^{n-1}\beta_j \right)}t^{\beta -\mathbf{1}}.    \label{eq-kernel}
\end{align}
We now consider the function $\wt{b}$ of one variable defined by
\[ \wt{b}(t_n)=\wt{B}(\underbrace{0,\dots,0}_{n-1}, t_n).\]
This is defined in the punctured disc $\{0<\abs{t_n}<1\},$
and noting that in \eqref{eq-kernel} only the terms with $\beta_j=1$, $1\leq j \leq n-1$ survive if $t_1=\dots=t_{n-1}=0$, we conclude that
\begin{align}
    \wt{b}(t_n)&= \sum_{\beta_n=0}^\infty \frac{\beta_n(1+\beta_n)^{n-1}}{(1+\beta_n)^{n-1}-1} t_n^{\beta_n-1}\nonumber\\
    &=\frac{ t_n^{-1}}{n-1}+ \sum_{k=1}^\infty k t_n^{k-1} + \sum_{k=1}^\infty \frac{k}{(k+1)^{n-1} -1} t_n^{k-1}\nonumber\\
    &=\frac{ t_n^{-1}}{n-1}+  \frac{1}{(1-t_n)^2} + \widehat{b}(t_n) \label{eq-whb},
\end{align} 
where 
\[ \widehat{b}(t_n)= \sum_{k=1}^\infty \frac{k}{(k+1)^{n-1} -1} \cdot t_n^{k-1}.\]

Since the function $\wt{B}$ is holomorphic on the domain $\{(z_1\ol{w_1},\dots, z_n\ol{w_n})\mid z,w\in \Omega_{n, n-1}\}$ 
it follows that $\wt{b}$ is holomorphic in the punctured disc $\{0<\abs{t_n}<1\},$ and therefore $\widehat{b}$ is holomorphic in the unit 
disc $\{\abs{t_n}<1\}$.

Now for a contradiction, assume that $\mathbb{B}_{\Omega_{n,n-1}}$ is rational. It follows that $\widehat{b}$ is a rational function of one variable, holomorphic in the unit disc, and its $k$-th Taylor coefficient 
decays as $k^{-(n-2)}$ as $k\to\infty$. Recall that by hypothesis $n\geq 3$, 
so the coefficients go to zero. 

Let $\alpha_1, \dots, \alpha_m$ be the poles of the rational function $\widehat{b}$, where $\abs{\alpha_j}\geq 1$ since $\widehat{b}$ is 
holomorphic in the unit disc. It follows by expansion in partial fractions (see \cite[p. 256ff]{analytic}) that the $k$-th Taylor coefficient 
of $\widehat{b}$ is of the form $\sum_{j=1}^m \alpha_j^{-k} \Pi_j(k)$ where $\Pi_j$ is a polynomial for each $j$. 
Since the coefficients go to zero as $k\to \infty$, we must have $\abs{\alpha_j}>1$, for each $j=1,\dots, m$. 
Therefore, the decay of the coefficients is exponential in $k$, which contradicts the $k^{-(n-2)}$ decay. Therefore $\widehat{b}$ cannot be a rational function, and so $\mathbb{B}_{\Omega_{n,n-1}}$ is not a rational function if $n\geq 3$.

\subsection{Some remarks on the nature of the Bergman Kernel of \texorpdfstring{$\Omega_{n,s}$}{Omegans}}\label{sec-bergmanremarks}
The form of the coefficients of the series in Theorem~\ref{thm-monomial} as 
well as the argument in the proof of Theorem~\ref{thm-notrational} 
suggest that the Bergman kernel of $\Omega_{n,s}$ is not rational except for
$s=1$, though we do not have a complete proof of this yet. However, Theorem~\ref{thm-monomial} is already sufficient to rule out certain
hasty conjectures about the form of $\mathbb{B}_{\Omega_{n,s}}$ that one might make based on \eqref{eq-kscrh} or similar formulas in \cite{park}. For example, for $s\not=1$, the kernel $\mathbb{B}_{\Omega_{n,s}}$ \emph{cannot}  be written in the form
\[ \frac{1}{\pi^n} \frac{P(t)}{\left(\prod_{b=s+1}^n t_b^{k_b}- \prod_{a=1}^s t_a^{k_a} \right)^2\cdot \prod_{b=s+1}^n (1-t_b)^2}, \]
for a polynomial $P$, since the coefficient of $t^\alpha$ of the Taylor expansion of this function is a polynomial in $\alpha$. Additionally, we saw above that when $s\not =1$, the Taylor coefficients of $\mathbb{B}_{\Omega_{n,s}}$ are rational functions of $\alpha$ which are not polynomials. Another interesting algebraic property is given by the following:
\begin{prop}
Let $n\geq 2$ and $1\leq s\leq n-1$. Let $\wt{B}$ be the function of $t_j=z_j\ol{w_j}$ associated with the Bergman kernel of $\Omega_{n,s}$, 
as defined in \eqref{eq-btilde}. Then there is a nonzero linear differential operator 
$\mathscr{L}$ with polynomial coefficients, such that $\mathscr{L}\wt{B}$ is a polynomial.
\end{prop}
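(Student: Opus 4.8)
The plan is to exploit the fact, furnished by Theorem~\ref{thm-monomial}, that the Taylor coefficients of $\wt{B}$ are rational functions of the multi-index. Indeed, by \eqref{eq-RSdef} we may write
\[ \wt{B}(t) = \frac{1}{\pi^n}\sum_{\beta\in\mathcal{T}}\frac{S_{n,s}(\beta)}{R_{n,s}(\beta)}\, t^{\beta-\mathbf{1}}, \]
where $\mathbf{1}=(1,\dots,1)$ and $\mathcal{T}\subset\mathbb{Z}^n$ is the set of $\beta$ satisfying the finiteness conditions \eqref{eq-somegank}. The operator $\mathscr{L}$ will be produced in two stages: first an operator built from Euler fields that cancels the denominator $R_{n,s}$, turning the coefficient attached to $t^{\beta-\mathbf{1}}$ into the polynomial $S_{n,s}(\beta)$; and then an ordinary multiplication operator that clears the denominator of the resulting rational function.

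First I would introduce the commuting Euler operators $\theta_j = t_j\,\frac{\partial}{\partial t_j}$, each of which is a differential operator with polynomial coefficients. Because the exponent of $t_j$ in $t^{\beta-\mathbf{1}}$ equals $\beta_j-1$, one has $(\theta_j+1)\,t^{\beta-\mathbf{1}}=\beta_j\,t^{\beta-\mathbf{1}}$, so that for any polynomial $p$ the operator $p(\theta_1+1,\dots,\theta_n+1)$ acts on the monomial $t^{\beta-\mathbf{1}}$ as multiplication by $p(\beta)$. Taking $p=R_{n,s}$ and setting $\mathscr{L}_0:=R_{n,s}(\theta_1+1,\dots,\theta_n+1)$, which is nonzero since it acts on $t^{\beta-\mathbf{1}}$ by the nonzero scalar $R_{n,s}(\beta)$, I obtain
\[ \mathscr{L}_0\,\wt{B} \;=\; \frac{1}{\pi^n}\sum_{\beta\in\mathcal{T}} S_{n,s}(\beta)\, t^{\beta-\mathbf{1}}. \]

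The remaining task is to show that this last series represents a rational function of $t$. The set $\mathcal{T}$ is the set of lattice points of a pointed rational polyhedron in $\mathbb{R}^n$ (pointedness follows because the constraints $\beta_j\ge 1$ for $1\le j\le s$, together with $\beta_j+\beta_\ell\ge 1$, leave the recession cone line-free). By the theory of generating functions of lattice points in rational polyhedra, $\sum_{\beta\in\mathcal{T}}t^{\beta}$ is rational in $t$, and multiplying the coefficients by the polynomial $S_{n,s}(\beta)$ through $S_{n,s}(\theta_1+1,\dots,\theta_n+1)$ preserves rationality; hence $\mathscr{L}_0\wt{B}=P_0(t)/Q_0(t)$ with $P_0,Q_0$ polynomials and $Q_0\not\equiv 0$. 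Setting $\mathscr{L}:=Q_0(t)\cdot\mathscr{L}_0$, which is again a nonzero differential operator with polynomial coefficients, yields $\mathscr{L}\,\wt{B} = P_0(t)$, a polynomial, as required. (Note that for $s=1$ we have $R_{n,1}=1$, so $\mathscr{L}_0$ is the identity and the statement reduces to the rationality already recorded in Theorem~\ref{thm-computation}.)

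I expect the only genuinely delicate point to be the rationality of $\sum_{\beta\in\mathcal{T}} S_{n,s}(\beta)\,t^{\beta-\mathbf{1}}$, because the coupling inequalities $\beta_j+\beta_\ell>0$ prevent $\mathcal{T}$ from being a product of rays. Rather than invoke the general polyhedral theory, one can argue directly: for fixed $\beta_1,\dots,\beta_s\ge 1$ the conditions on the remaining variables read $\beta_\ell\ge 1-\min_{1\le j\le s}\beta_j$ for each $\ell$ separately, so the inner sum over $\beta_{s+1},\dots,\beta_n$ factors into one-variable sums of a polynomial against a geometric series, each manifestly rational, and a decomposition of the region $\{\beta_1,\dots,\beta_s\ge 1\}$ according to which coordinate attains the minimum then expresses the outer sum as a finite combination of orthant sums, each rational. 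Carrying out this bookkeeping is routine, but it is where the real work lies.
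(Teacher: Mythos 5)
Your proposal is correct and follows essentially the same route as the paper: the paper likewise applies $R_{n,s}$ evaluated at Euler operators (composed with multiplication by $t_1\cdots t_n$ to effect the same index shift you achieve with $\theta_j+1$) to reduce the coefficients to $S_{n,s}(\beta)$, observes that the resulting series over the polyhedral index set is rational, and then clears the denominator by a multiplication operator. The only difference is presentational — you give more detail on the rationality of $\sum_\beta S_{n,s}(\beta)t^{\beta-\mathbf{1}}$, which the paper dispatches by reference to the summation in the proof of Theorem~\ref{thm-computation}.
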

\begin{proof}
The case $s=1$ is trivial, since then by Theorem~\ref{thm-computation}, $\wt{B}$ is a rational function $P/Q$, where $P,Q$ are polynomials. 
Therefore we can simply take $\mathscr{L}$ to be the zeroth order multiplication operator determined by $Q$.

Notice that we can write, thanks to Theorem~\ref{thm-monomial}, the series 
representation
\[\wt{B}(t) =\frac{1}{\pi^n}\cdot\frac{1}{t_1\dots t_n}\sum_{\beta\in \mathcal{S}} \frac{S_{n,s}(\beta)}{R_{n,s}(\beta)}t^\beta, \]
where $R_{n,s}(\beta_1,\dots, \beta_n)$ and $S_{n,s}(\beta_1,\dots, \beta_n)$ are homogeneous polynomials in the variables $\beta_1,\dots,\beta_n$, and 
$\mathcal{S}$ is the subset of $\mathbb{Z}^n$ determined by the conditions \eqref{eq-somegank}. Let $\mathcal{M}$ denote the multiplication operator induced by the polynomial $t_1\dots t_n$, and let
\[\mathscr{L}_0=R_{n,s}\left( t_1\frac{\partial}{\partial t_1},\dots, t_n\frac{\partial}{\partial t_n}\right)\circ \mathcal{M}. \]
Then we see that 
\begin{equation}
\label{eq-lukenumber} \mathscr{L}_0\widetilde{B}(t)= \frac{1}{\pi^n} \sum_{\beta\in S}S_{n,s}(\beta)t^\beta. 
\end{equation}

Now since the coefficients  $S_{n,s}(\beta)$ are polynomials in $\beta$ and 
the region of summation $\mathcal{S}$ is the intersection of a finite number of \emph{closed} half-spaces in $\mathbb{Z}^n$ (since the open conditions in \eqref{eq-somegank} can be replaced by closed conditions), it follows that the right hand side of \eqref{eq-lukenumber} is a rational function (cf. the proof of Theorem~\ref{thm-computation} below). If $Q(t)$ is the denominator of this rational function, and $\mathcal{Q}$ is the multiplication operator induced by $Q$, we can take $\mathscr{L}=\mathcal{Q}\circ \mathscr{L}_0$.
\end{proof}

\section{Proof of Theorem~\ref{thm-computation}}
\subsection{Kernel of Model domain}We begin by computing the Bergman kernel of the model elementary Reinhardt domain $\Omega_{n,1}$:
\begin{prop} \label{prop-H}
The Bergman kernel of $\Omega_{n,1}$ is given by
\[ \displaystyle{\mathbb{B}_{\Omega_{n,1}}(z,w)=  \frac{1}{\pi^n}\cdot\frac{\displaystyle{\prod_{b=2}^nt_b}}{\displaystyle{\left(\prod_{b=2}^n t_b-t_1\right)^2\cdot\prod_{b=2}^n\left(1-t_b\right)^2}}, }\]
where
\[ t_b= z_b\ol{w_b} \quad \text{ for } 1\leq b \leq n.\]
\end{prop}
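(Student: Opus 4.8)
The plan is to compute $\mathbb{B}_{\Omega_{n,1}}$ directly from the Reinhardt series representation \eqref{eq-bergmanseries}, feeding in the monomial norms provided by Theorem~\ref{thm-monomial} in the signature-one case. First I would specialize Theorem~\ref{thm-monomial} to $s=1$: there $R_{n,1}=1$, and by \eqref{eq-pmkhyp} the denominator polynomial is $S_{n,1}(\beta)=\beta_1\prod_{\ell=2}^n(\beta_1+\beta_\ell)$, so \eqref{eq-RSdef} gives
\[\frac{1}{\norm{e_\alpha}^2}=\frac{1}{\pi^n}\,\beta_1\prod_{\ell=2}^n(\beta_1+\beta_\ell),\qquad \beta_j=\alpha_j+1,\]
valid precisely on the index set cut out by \eqref{eq-somegank}, namely $\beta_1>0$ and $\beta_1+\beta_\ell>0$ for $2\leq \ell\leq n$ (and the coefficient is $0$ otherwise). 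Writing $t_b=z_b\ol{w_b}$ and viewing the kernel as a function of $t$ as in \eqref{eq-btilde}, this turns \eqref{eq-bergmanseries} into the explicit series
\[\pi^n\,\mathbb{B}_{\Omega_{n,1}} = \sum_{\beta} \beta_1\prod_{\ell=2}^n(\beta_1+\beta_\ell)\,t^{\beta-\mathbf{1}},\]
summed over the region just described, with $\mathbf{1}=(1,\dots,1)$.

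The second step is to sum this series in closed form. The key observation is that, upon fixing $\beta_1=m\geq 1$, the constraints on the remaining indices decouple: each $\beta_\ell$ ranges freely over the integers $>-m$. Hence the sum factors as a product of one-variable sums, each of the form $\sum_{\beta_\ell>-m}(m+\beta_\ell)\,t_\ell^{\beta_\ell-1}$. After the index shift $j=m+\beta_\ell$ this is the standard series $\sum_{j\geq 1} j\,t_\ell^{j-m-1}=t_\ell^{-m}/(1-t_\ell)^2$. Taking the product over $2\leq \ell\leq n$ and then summing over $m$ by means of $\sum_{m\geq 1} m\,x^{m-1}=(1-x)^{-2}$, applied with $x=t_1/\prod_{\ell=2}^n t_\ell$, collapses the whole expression to
\[\pi^n\,\mathbb{B}_{\Omega_{n,1}}=\frac{\prod_{\ell=2}^n t_\ell}{\left(\prod_{\ell=2}^n t_\ell-t_1\right)^2\cdot\prod_{\ell=2}^n(1-t_\ell)^2},\]
which is the asserted formula.

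The only delicate point — and the main thing to get right — is the bookkeeping of the coupled summation region $\{\beta_1>0,\ \beta_1+\beta_\ell>0\}$: one must recognize that conditioning on $\beta_1=m$ decouples the remaining sums, and handle the negative lower limits on the $\beta_\ell$ correctly via the shift $j=m+\beta_\ell$. There is no deep obstacle beyond this, since Theorem~\ref{thm-monomial} already supplies the coefficients and the series \eqref{eq-bergmanseries} converges on $\wt{\Omega}$ by the general Bergman theory, so the manipulations are interchanges of absolutely convergent geometric-type sums. I would close by noting that the resulting rational function, being holomorphic on the relevant domain and agreeing coefficientwise with \eqref{eq-bergmanseries}, is the Bergman kernel by uniqueness of the Reinhardt expansion.
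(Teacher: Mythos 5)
Your proposal is correct and follows essentially the same route as the paper: read off the coefficients from Theorem~\ref{thm-monomial} with $s=1$, factor the series by fixing the first index, sum the inner shifted arithmetic--geometric series to get $t_\ell^{-m}/(1-t_\ell)^2$, and finish with $\sum_{m\ge 1} m\rho^{m-1}=(1-\rho)^{-2}$ for $\rho=t_1/\prod_{\ell=2}^n t_\ell$. The only difference is cosmetic (you work in the shifted variables $\beta=\alpha+\mathbf{1}$ where the paper uses $\alpha$), and your identification of the decoupling of the summation region is exactly the step the paper carries out.
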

\begin{proof}
From Theorem~\ref{thm-monomial}, we see that for $\alpha\in \mathbb{Z}^n$, we have 
$\norm{e_\alpha}_{\Omega_{n,1}}^2 <\infty$ if and only if
\[\alpha_1+1>0, \alpha_1+\alpha_\ell+2>0,\quad 2\leq \ell \leq n, \]
which is equivalent to
\[ \alpha_1\geq 0, \alpha_1+\alpha_\ell+1 \geq 0,\quad 2\leq\ell\leq n.\]
Let $\mathcal{S}\subset\mathbb{Z}^n$ be the set of multi-indices satisfying the above condition. Also from Theorem~\ref{thm-monomial}, it follows that
for $\alpha\in \mathcal{S}$ we have
\[\norm{e_\alpha}^2_{\Omega_{n,1}} = \pi^n \frac{1}{(\alpha_1+1)\prod_{b=2}^n (\alpha_1+\alpha_b+2)}.\]
Using \eqref{eq-bergmanseries} and the abbreviation $t_b=z_b \ol{w_b}$, we have by a direct summation of the series \eqref{eq-bergmanseries}:
\begin{align*}
    \mathbb{B}_{\Omega_{n,1}}(z,w)&= \frac{1}{\pi^n}\sum_{\alpha\in \mathcal{S}}\left((\alpha_1+1)\prod_{b=2}^n (\alpha_1+\alpha_b+2)\right)t^\alpha\nonumber\\
  &= \frac{1}{\pi^n}\cdot\sum_{\alpha_1 = 0}^\infty (\alpha_1+1)t_1^{\alpha_1}\prod_{b=2}^n \left(\sum_{\alpha_b=-\alpha_1-1}^\infty (\alpha_1+\alpha_b+2)t_b^{\alpha_b}\right) \nonumber\\
      &= \frac{1}{\pi^n}\cdot\prod_{b=2}^n\frac{1}{t_b(1-t_b)^2}\sum_{\alpha_1 = 0}^\infty (\alpha_1+1)t_1^{\alpha_1}\prod_{b=2}^n t_b^{-\alpha_1} \\
       &\left(\text{using the easily proved identity $\displaystyle{\sum_{\alpha_b = -\alpha_1-1}^\infty (\alpha_1+\alpha_b+2) t_{b}^{\alpha_b} = \frac{t_{b}^{-\alpha_1-1}}{(1-t_{b})^2}}$}\right)\\
    &= \frac{1}{\pi^n}\cdot\prod_{b=2}^n\frac{1}{t_b(1-t_b)^2}\sum_{\alpha_1 = 0}^\infty (\alpha_1+1)\rho^{\alpha_1} ,\quad \text{ with $\rho=\frac{t_1}{\prod_{b=2}^n t_b}$} \\
    &= \frac{1}{\pi^n} \cdot\frac{1}{(1-\rho)^2}\cdot\prod_{b=2}^n\frac{1}{t_b(1-t_b)^2} \\
    &= \frac{1}{\pi^n}\cdot\frac{1}{\left(1-\dfrac{t_1}{\prod_{b=2}^n t_b}\right)^2}\cdot\prod_{b=2}^n\frac{1}{t_b(1-t_b)^2} \\
    &= \frac{1}{\pi^n}\cdot\frac{\prod_{b=2}^n t_b}{(\prod_{b=2}^n t_b-t_1)^2\cdot\prod_{b=2}^n(1-t_b)^2},
    \end{align*}
where we have used the identity $\sum_{\alpha_1=0}^\infty (\alpha_1+1)\rho^{\alpha_1}= \frac{1}{(1-\rho)^2}$ which holds since $\abs{\rho}<1$.
\end{proof}

\subsection{Explicit Kernel} \label{sec-explicit}

The following simple arithmetical fact will be used:
\begin{lem}\label{lem-numbertheoretic}
    Let $k_1, \dots, k_n$ be positive integers such that $\mathrm{gcd}(k_1, \dots, k_n) = 1$, i.e. $k_1, \dots, k_n$ are relatively prime. Let $K = \mathrm{lcm}(k_1, \dots, k_n)$ and  $\ell_j = \dfrac{K}{k_j}$ with $1 \leq j \leq n$.
    Then \[ \mathrm{lcm}(\ell_1, \dots, \ell_n) = K.\]
\end{lem}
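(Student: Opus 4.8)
The plan is to prove the statement $\mathrm{lcm}(\ell_1,\dots,\ell_n)=K$ by comparing prime factorizations, exponent by exponent. Fix a prime $p$ and for any positive integer $m$ write $v_p(m)$ for the exponent of $p$ in the factorization of $m$. Recall that for least common multiples and greatest common divisors, $v_p$ behaves by taking maxima and minima respectively: $v_p(\mathrm{lcm}(a_1,\dots,a_n))=\max_j v_p(a_j)$ and $v_p(\gcd(a_1,\dots,a_n))=\min_j v_p(a_j)$. Since a positive integer is determined by its sequence of $p$-adic valuations over all primes $p$, it suffices to show that $v_p(\mathrm{lcm}(\ell_1,\dots,\ell_n))=v_p(K)$ for every prime $p$.

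First I would record the valuations of the basic quantities. Write $e_j=v_p(k_j)$ for $1\le j\le n$. Then $v_p(K)=v_p(\mathrm{lcm}(k_1,\dots,k_n))=\max_j e_j$; call this maximum $E$. Since $\ell_j=K/k_j$ and the $p$-adic valuation is additive over products and subtractive over quotients, $v_p(\ell_j)=v_p(K)-v_p(k_j)=E-e_j$, which is a nonnegative integer because $E\ge e_j$ for every $j$ (so $\ell_j$ is genuinely an integer at the prime $p$, as it must be). Consequently
\[
 v_p(\mathrm{lcm}(\ell_1,\dots,\ell_n))=\max_{1\le j\le n} v_p(\ell_j)=\max_{1\le j\le n}(E-e_j)=E-\min_{1\le j\le n} e_j.
\]

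The remaining step is to identify $\min_j e_j$. Here I would invoke the coprimality hypothesis $\gcd(k_1,\dots,k_n)=1$: taking $p$-adic valuations gives $\min_j e_j=v_p(\gcd(k_1,\dots,k_n))=v_p(1)=0$. Therefore $v_p(\mathrm{lcm}(\ell_1,\dots,\ell_n))=E-0=E=v_p(K)$. Since $p$ was an arbitrary prime, the factorizations of $\mathrm{lcm}(\ell_1,\dots,\ell_n)$ and $K$ agree at every prime, whence the two integers are equal, as claimed.

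The only place where real content enters is the last step, where the hypothesis $\gcd(k_1,\dots,k_n)=1$ forces $\min_j e_j=0$ at every prime; without coprimality the conclusion fails (for each prime $p$ one would instead pick up a factor $p^{\min_j e_j}$). Everything else is the routine bookkeeping of $p$-adic valuations, so I expect no genuine obstacle — the main care needed is simply to phrase the max/min identities cleanly and to note that $v_p(\ell_j)\ge 0$ so that the $\ell_j$ are well-defined positive integers.
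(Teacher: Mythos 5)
Your proof is correct and is essentially identical to the paper's: both arguments work prime by prime, write $v_p(\ell_j)=v_p(K)-v_p(k_j)$, and use $\max_j\bigl(v_p(K)-v_p(k_j)\bigr)=v_p(K)-\min_j v_p(k_j)$ together with the coprimality hypothesis to conclude $\min_j v_p(k_j)=0$. The only difference is notational (the paper writes out the full prime factorizations rather than fixing one prime at a time), so there is nothing further to add.
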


\begin{proof}
Let $\displaystyle{k_j = \prod_{p \in \text{Primes}} p^{v_{j}(p)}}$ be the prime factoring of $k_j$. Then $K =\displaystyle{ \prod_{p \in \text{Primes}} p^{N(p)}}$ where 
\[ N(p) = \max_{1\leq j\leq n}(v_{j}(p)).\]
Now 
\[\ell_j = \frac{K}{k_j} = \prod_{p \in \text{Primes}} p^{N(p) - v_{j}(p)}.\]

So, 
\[\mathrm{lcm}(\ell_1,\dots,\ell_n) = \prod_{p \in \text{Primes}} p^{{\max_j(N(p) - v_{j}(p))}} = \prod_{p \in \text{Primes}} p^{N(p) - \min_j(v_{j}(p))} = \prod_{p \in \text{Primes}} p^{N(p)} =K,\]
where we have used the fact that since $\mathrm{gcd}(k_1,\dots,k_n)=1$, it follows that 
$\displaystyle{ \min_{1\leq j \leq n}(v_{j}(p))=0.}$
\end{proof}
\begin{proof}[Proof of Theorem~\ref{thm-computation}]
Let $\phi:\Omega_{n,1}\to \mathscr{H}(k)$ be the standard proper holomorphic map which was constructed in Proposition~\ref{prop-propermap}. Notice that 
this map is given by the formula
\begin{equation}
        \label{eq-phidef}
        \phi(z_1,\dots, z_n)= \left(z_1^{\ell_1}, \dots, z_n^{\ell_n}\right),
\end{equation}
where $\ell_j$ has exactly the same meaning as in the statement of our result.
Now by the famous Bell transformation formula (\cite{belltransformation}):
\begin{equation}
    \label{eq-bell}
    u(z)\cdot\mathbb{B}_{\mathscr{H}(k)}(\phi(z),w) = \sum_{j}\mathbb{B}_{\Omega_{n,1}}(z,\Phi_j(w))\cdot \ol{U_j(w)},
\end{equation}
where $u=\det(\phi')$, the $\Phi_j$'s are local branches of $\phi^{-1}$, and
$U_j= \det(\Phi_j')$.  The Jacobian determinant of  $\phi$ is given by
\[u(z)= \det \phi'(z)= \det \mathrm{diag}(\ell_1z_1^{\ell_1-1},\cdots, \ell_n z_n^{\ell_n-1})=  \prod_{a=1}^n \ell_a z_a^{\ell_a-1}.\]

The map $\phi$  has $L =\prod_{a=1}^n \ell_a $ local inverses.
To enumerate them, introduce the set of multi-indices
\begin{equation}
    \label{eq-gothicb}
    \mathfrak{B}=\{(j_1,\dots,j_n)\in \mathbb{Z}^n\mid 0\leq j_a \leq \ell_a-1, \text{ for } a=1,\dots, n\},
\end{equation}
then for each multi-index $j\in \mathfrak{B}$, there is a branch $\Phi_j$ of the
local inverse of $\phi$ given by
\[\Phi_j(z_1,\cdots, z_n)= \left(\zeta_1^{j_1}z_1^{\frac{1}{\ell_1}},\zeta_2^{j_2}  z_2^{\frac{1}{\ell_2}}, \cdots,\zeta_n^{j_n} z_n^{\frac{1}{\ell_b}}\right), \]
where 
\[ \zeta_a = e^{\frac{2\pi i}{\ell_a}}, \quad\text{ for each } 1\leq a \leq n \]
is an $\ell_a$-th root of unity, and the root functions $z_1^\frac{1}{\ell_1}, \dots, z_n^\frac{1}{\ell_n}$ exist locally off the critical locus. We then 
have for each $j\in \mathfrak{B}$
\[ U_j(w)= {\det \Phi_j'(w)}= \det \mathrm{diag}\left(\frac{\zeta_1^{j_1}}{\ell_1}{w_1}^{\frac{1}{\ell_1}-1},\cdots,
\frac{\zeta_n^{j_n}}{\ell_n}{w_n}^{\frac{1}{\ell_n}-1}\right)= \prod_{a=1}^n \frac{{\zeta_a}^{j_a}}{\ell_a}{w_a}^{\frac{1}{\ell_a}-1} ,\]
where $\mathrm{diag}(\cdot)$ denotes a diagonal matrix with the specified diagonal entries. 
Therefore by Bell's formula  \eqref{eq-bell} we have
\begin{align*}
    & \prod_{a=1}^n \ell_a z_a^{\ell_a-1} \cdot \mathbb{B}_{\mathscr{H}(k)}(\phi(z), w)
   =\sum_{j\in \mathfrak{B}}\, \mathbb{B}_{\Omega_{n,1}}(z,\Phi_j(w)) \cdot\ol{ \prod_{a=1}^n \frac{{\zeta_a}^{j_a}}{\ell_a}{w_a}^{\frac{1}{\ell_a}-1}}\\
    &= \frac{1}{\pi^n} \cdot \sum_{j\in \mathfrak{B}}
    \dfrac{\displaystyle{\prod_{b=2}^n
    \ol{\zeta_b}^{j_b}z_b\ol{w_b}^\frac{1}{\ell_b}}}
    {\displaystyle{\left(\prod_{b=2}^n {\ol{\zeta_b}}^{j_b}z_b\ol{w_b}^\frac{1}{\ell_b}
    -\zeta_1^{j_1}z_1\ol{w_1}^{\frac{1}{\ell_1}}\right)^2 \cdot
    \prod_{b=2}^n\left(1- \ol{\zeta_{b}}^{j_b}z_bw_b^{\frac{1}{\ell_b}}\right)^2}}\cdot 
    \prod_{a=1}^n \frac{\ol{\zeta_a}^{j_a}}{\ell_a}\,\ol{w_a}^{\frac{1}{\ell_a}-1},
\end{align*}
where  we have used the formula in Proposition~\ref{prop-H} for the Bergman kernel of $\Omega_{n,1}$. Introduce the abbreviations
\begin{equation}
    \label{eq-ra}r_a= z_a \ol{w}_a^\frac{1}{\ell_a}, \quad a=1,\dots, n,
\end{equation}
so that we have from the above (recall that $L=\prod_{j=1}^n \ell_j$)
\begin{align}
  \mathbb{B}_{\mathscr{H}(k)}(\phi(z), w)& = \frac{1}{\pi^n L^2}
  \sum_{j\in \mathfrak{B}}
  \frac{\displaystyle{\prod_{a=1}^n \ol{\zeta_a}^{j_a}r_a^{1-\ell_a}\cdot \prod_{b=2}^n\ol{\zeta_b}^{j_b}r_b }}
  {\displaystyle{\left(\prod_{b=2}^n \ol{\zeta_b}^{j_b}r_b -\ol{\zeta_1}^{j_1}r_1\right)^2 \prod_{b=2}^n (1-\ol{\zeta_b}^{j_b}r_b)^2}} \nonumber \\
  &=\frac{1}{\pi^n L^2}
  \sum_{j\in \mathfrak{B}}
  \frac{\displaystyle{\ol{\zeta_1}^{j_1}r_1^{1-\ell_1}\cdot \prod_{b=2}^n\ol{\zeta_b}^{2j_b}r_b^{2-\ell_b} }}
  {\displaystyle{\left(\prod_{b=2}^n \ol{\zeta_b}^{j_b}r_b -\ol{\zeta_1}^{j_1}r_1\right)^2 \prod_{b=2}^n (1-\ol{\zeta_b}^{j_b}r_b)^2}}. \label{eq-whb}  
\end{align}
Let $\widehat{B}(r_1,\dots,r_n)$  denote the quantity in \eqref{eq-whb}.
 We claim that the function  $\widehat{B}$ of $n$ variables has the following invariance property, which will be needed later: for each $c$ with $1\leq c \leq n$, we have 
\begin{equation}
    \widehat{B}(r_1,\cdots, \ol{\zeta_c}r_c,  \cdots,r_n) = \widehat{B}(r_1,\cdots, r_n)  \label{eq-invariance}.
\end{equation}
To see this, notice that we have, for each $c$ with $2\leq c \leq n$, that
\begin{align*} 
 &\widehat{B}(r_1,r_2,\cdots, \ol{\zeta_c}r_c,  \cdots,r_n)=\\ &\frac{1}{\pi^n L^2}
  \sum_{j\in \mathfrak{B}}
  \frac{\displaystyle{\ol{\zeta_1}^{j_1}r_1^{1-\ell_1}\cdot \ol{\zeta_c}^{2(j_c+1)}r_c^{2-\ell_c} \cdot \prod_{\substack{2\leq b\leq n\\ b\not=c}}\ol{\zeta_b}^{2j_b}r_b^{2-\ell_b} }}
  {\displaystyle{\left(\ol{\zeta_c}^{j_c + 1}r_c\prod_{\substack{2\leq b\leq n\\ b\not=c}} \ol{\zeta_b}^{j_b}r_b -\ol{\zeta_1}^{j_1}r_1\right)^2 (1-\ol{\zeta_c}^{j_c+1}r_c)^2\prod_{\substack{2\leq b\leq n\\ b\not=c}} (1-\ol{\zeta_b}^{j_b}r_b)^2}} .\nonumber\\
 \end{align*}
 Notice that the above sum is precisely the same as $\widehat{B}(r_1,...,r_n)$, since changing $j_c$ to $j_c+1$ simply amounts to a re-indexing of the sum, thanks to the fact that the $\ell_c$-th roots of unity form a cyclic group generated by $\zeta_c$. 

 In a similar way, $\widehat{B}(\ol{\zeta_1}r_1,r_2,\cdots,r_n)$ is precisely the same as $\widehat{B}(r_1,...,r_n)$, since changing $j_1$ to $j_1+1$ simply amounts to a re-indexing of the sum, thanks to the fact that the $\ell_1$-th roots of unity form a cyclic group generated by $\zeta_1$.  These two observations combined establish \eqref{eq-invariance}. 
 
  Now let
 \begin{equation}
     \label{eq-Delta}\Delta = \left(\left(\prod_{b=2}^n r_b\right)^K-r_1^K\right)^2\cdot \prod_{b=2}^n\left(1-r_b^{\ell_b}\right)^2,
 \end{equation}
where $K= \mathrm{lcm}(k_1,\dots, k_n)$ as in  the statement of the theorem.
Then we can write
\begin{align}
&\widehat{B}(r_1,\dots, r_n)\nonumber\\
 &=  \frac{1}{\pi^n  L^2 \Delta}\,
   \sum_{j\in \mathfrak{B}}\left(
   \ol{\zeta_1}^{j_1}r_1^{1-\ell_1}\prod_{b=2}^n \ol{\zeta_b}^{2j_b}r_b^{2-\ell_b}
   \cdot \frac{\displaystyle{\left(\left(\prod_{b=2}^n r_b\right)^K-r_1^K\right)^2} }
   { \displaystyle{\left(\prod_{b=2}^n \ol{\zeta_b}^{j_b}r_b - \ol{\zeta_1}^{j_1}r_1\right)^2}} \cdot \prod_{b=2}^n \frac{\left(1-r_b^{\ell_b}\right)^2}{ \left(1-\ol{\zeta_b}^{j_b}r_b\right)^2} \nonumber \right)\\
  &=  \frac{1}{\pi^n  L^2 \Delta}\,
   \sum_{j\in \mathfrak{B}}\left(
   \ol{\zeta_1}^{j_1}r_1^{1-\ell_1}\prod_{b=2}^n \ol{\zeta_b}^{2j_b}r_b^{2-\ell_b}\cdot
   \left( \sum_{\nu=0}^{K-1} \left(\prod_{b=2}^{n} \ol{\zeta_b}^{j_{b}}r_{b}\right)^{\nu} (\ol{\zeta_1}^{j_1} r_1)^{K-\nu -1}  \right)^{2} \times\right.\nonumber\\
   & \left.\phantom{\frac{1}{\pi^n \cdot L^2\cdot \Delta}\,
   \sum_{j\in \mathfrak{B}}
   \ol{\zeta_1}^{j_1}r_1^{1-\ell_1}\prod_{b=2}^n \ol{\zeta_b}^{2j_b}r_b^{2-\ell_b}} \times \prod_{b=2}^{n}\left( \sum_{m_{b}=0}^{\ell_b-1}
   (\ol{\zeta_b}^{j_{b}}r_{b})^{m_{b}} \right)^{2}\right) \label{eq-squared}\\
    &= \frac{1}{\pi^n  L^2 \Delta}\,\sum_{\alpha_1=0}^{2K-2}\sum_{\alpha_2=0}^{2K+2\ell_2-4}\dots \sum_{\alpha_n=0}^{2K+2\ell_n-4}A(\alpha)r_1^{\alpha_1+1-\ell_1} \prod_{b=2}^n r_b^{\alpha_b+2-\ell_b}\label{eq-lastbut}\\
  &= \frac{1}{\pi^n  L^2 \Delta}\,\sum_{\alpha_1=1-\ell_1}^{2K-\ell_1-1}\sum_{\alpha_2=2-\ell_2}^{2K+\ell_2-2}\dots \sum_{\alpha_n=2-\ell_n}^{2K+\ell_n-2}\widetilde{A}(\alpha)r^\alpha,\label{eq-lastline}
\end{align}
where in \eqref{eq-lastbut}, for simplicity of notation, we have expressed the 
quantity under the summation sign in \eqref{eq-squared} as a (Laurent) polynomial in the $n$ variables $(r_1, \dots, r_n)$ with coefficients $A(\alpha)\in \cx$. In \eqref{eq-lastline}, we have re-indexed the sum, and we denote
 $r^\alpha=r_1^{\alpha_1}\dots r_n^{\alpha_n}$. Also, $\widetilde{A}(\alpha) = A(\alpha_1+\ell_1-1,\alpha_2+\ell_2-2,\dots,\alpha_n+\ell_n-2)$. 

Notice that \eqref{eq-lastline} is a multi-variable polynomial in $(r_1,\dots, r_n)$.  Then, by the invariance  of $\widehat{B}$ shown in \eqref{eq-invariance}, we can replace   the variable $r_a$, with $1\leq a \leq n$ by $\ol{\zeta_a}r_a$, and the value of the polynomial remains unchanged 
\begin{align*}
&\frac{1}{\pi^n  L^2 \Delta}\,\sum_{\alpha_1=1-\ell_1}^{2K-\ell_1-1}\sum_{\alpha_2=2-\ell_2}^{2K+\ell_2-2}\dots \sum_{\alpha_n=2-\ell_n}^{2K+\ell_n-2}\widetilde{A}(\alpha)r^\alpha \\= &\frac{1}{\pi^n  L^2 \Delta}\,\sum_{\alpha_1=1-\ell_1}^{2K-\ell_1-1}\sum_{\alpha_2=2-\ell_2}^{2K+\ell_2-2}\dots \sum_{\alpha_n=2-\ell_n}^{2K+\ell_n-2}\ol{\zeta}^{\alpha_{a}}\widetilde{A}(\alpha)r^\alpha .\end{align*}
Looking at the difference of the two sides of the above equation, we see that for each $r=(r_1,\dots, r_n)$ and each $1\leq a\leq n$, we have
\[
  \sum_{\alpha_1=1-\ell_1}^{2K-\ell_1-1}\sum_{\alpha_2=2-\ell_2}^{2K+\ell_2-2}\dots \sum_{\alpha_n=2-\ell_n}^{2K+\ell_n-2}(\ol{\zeta}^{\alpha_{a}}-1)\widetilde{A}(\alpha)r^\alpha = 0.
\]
This is a polynomial in $r$ which vanishes identically, so 
each of its coefficients is zero.  This implies that for a fixed $\alpha$, the quantity $\widetilde{A}(\alpha)$ can be non-zero  only if $(\ol{\zeta}^{\alpha_a}-1) = 0$. 
Since this holds for each $1\leq a \leq n$, the only terms in 
 \eqref{eq-lastline} that survive are the ones in which the  monomial $r^\alpha=r_1^{\alpha_1}r_2^{\alpha_2}\dots r_n^{\alpha_n}$ is of the form
 \[\alpha= \ell\cdot\beta = :(\ell_1\beta_1, \ell_2\beta_2, \dots,\ell_n\beta_n), \]
 for some $\beta \in \mathbb{Z}^n$.
 From the bounds on the indices $\alpha_c$ in  \eqref{eq-lastline}, this implies that the indices corresponding to possibly nonzero terms are the 
 following multiples of $\ell_c$: 
 \begin{equation}\label{eq-alphaclimits} \alpha_{c} = 0,\, \ell_c,\, 
 \dots,\, 2K\quad \text{for each }\quad 2 \leq c \leq n \text{ if } \ell_c \neq 1, \end{equation} 
 and 
 \begin{equation}\label{eq-alphaclimits2} \alpha_{c} = 1,\, 
 \dots,\, 2K-1\quad \text{for each }\quad 2 \leq c \leq n  \text{ if } \ell_c=1 \end{equation} 
 since for these (and only these) $\alpha_c$, we have $2 -\ell_c\leq \alpha_c \leq 2K+\ell_c-2$, and $\alpha_c$ is divisible by $\ell_c$. Recall here that by Lemma~\ref{lem-numbertheoretic}, the integer $K=\mathrm{lcm}(k_1,\dots, k_n)$ is divisible by $\ell_{c}$, since we also have $K=\mathrm{lcm}(\ell_1,\dots,\ell_n)$.
 Similar arguments also show that the indices $\alpha_1$ for which we can have possibly nonzero terms in \eqref{eq-lastline} are
 \begin{equation}\label{eq-alpha1limits}
  \alpha_{1} = 0,\, \ell_1,\, \dots, \,2K - 2\ell_{1}.
  \end{equation}
 Using the representation $\alpha=\ell\cdot \beta=(\ell_1\beta_1, \dots, \ell_n\beta_n)$, we see that these same
 indices are also described by the collection $\mathfrak{G}^*(k)$ of $\beta\in \mathbb{Z}^n$ 
 such that
 \begin{equation}
     \label{eq-gstar1}0\leq \beta_1\leq\frac{2K}{\ell_1}-2=2k_1-2,
 \end{equation}
 and for each $2 \leq b \leq n$
 \begin{equation}
     \label{eq-gstar2}\begin{cases} 0\leq\beta_b\leq\dfrac{2K}{\ell_b}=2k_b &\text{ if } \ell_b\not=1, \\ 1 \leq \beta_b \leq 2K-1=2k_b-1&\text{ if } \ell_b=1 .\end{cases}
 \end{equation}
Notice that the set $\mathfrak{G}$ of \eqref{eq-G} is contained in $\mathfrak{G}^*(k)$.
We can now write
\begin{align}
    \widehat{B}(r_1,\dots, r_n)&= \eqref{eq-lastline}
    = \frac{1}{\pi^n \cdot L^2\cdot \Delta}\,\sum_{\beta\in \mathfrak{G}^*(k)}\widetilde{A}(\ell\cdot \beta)\,r^{\ell\cdot\beta},\label{eq-aftervanishing}
    \end{align}
which follows from combining equations  \eqref{eq-alphaclimits} through  \eqref{eq-alpha1limits}.

We now proceed to compute the coefficients $\wt{A}(\ell\cdot \beta)$. Introduce, a set of indices $\mathfrak{C}\subset\mathbb{Z}^{n-1}$ by setting
\begin{equation}
    \label{eq-gothicc}\mathfrak{C}=\{(m_2,\dots, m_n)\in \mathbb{Z}^{n-1}\mid
0\leq m_b \leq \ell_b-1 \quad \text{ for } 2 \leq b \leq n\}.
\end{equation}
Now, in \eqref{eq-squared}, we rewrite the first square factor as a product of two sums over indices $\nu$ and $N$:
\begin{align*}
   &\left( \sum_{\nu=0}^{K-1} \left(\prod_{b=2}^{n} \ol{\zeta_b}^{j_{b}}r_{b}\right)^{\nu} (\ol{\zeta_1}^{j_1} r_1)^{K-\nu -1}  \right)^{2} =\\
   &\left( \sum_{\nu=0}^{K-1} \left(\prod_{b=2}^{n} \ol{\zeta_b}^{j_{b}}r_{b}\right)^{\nu} (\ol{\zeta_1}^{j_1} r_1)^{K-\nu -1}  \right)
   \left( \sum_{N=0}^{K-1} \left(\prod_{b=2}^{n} \ol{\zeta_b}^{j_{b}}r_{b}\right)^{N} (\ol{\zeta_1}^{j_1} r_1)^{K-N -1}  \right).
\end{align*}
Similarly, writing each of the other $(n-2)$ square  factors $\left( \sum_{m_{b}=0}^{\ell_b-1}
   (\ol{\zeta_b}^{j_{b}}r_{b})^{m_{b}} \right)^{2}$  for $2\leq b \leq n$
in \eqref{eq-squared} as a product of sums over different indices $m_b$ and $M_b$ and then expanding the products we can rewrite \eqref{eq-squared} as
\begin{align} &\widehat{B}(r_1,\dots, r_n)\nonumber\\
=& \frac{1}{\pi^n L^2\Delta } \sum_{j \in \mathfrak{B}}\,\sum_{m, M \in \mathfrak{C}}\, \sum_{\nu,N = 0}^{K-1}\ol{\zeta_1}^{j_1(2K-\nu-N-1)}r_1^{2K-\nu-N-\ell_1-1} \prod_{b=2}^n \ol{\zeta}_b^{j_b(\nu+N+m_b+M_b+2)}r_b^{m_b+M_b+\nu+N-\ell_b+2} ,\label{eq-khk2} \end{align}
where, in the sum above, $j=(j_1,\dots, j_n)$ ranges over the set $\mathfrak{B}$ of \eqref{eq-gothicb}, and $m=(m_2,\dots, m_n)$ and $M=(M_2,\dots, M_n)$ are multi-indices that range over the set $\mathfrak{C}$
of \eqref{eq-gothicc}, and the indices $\nu$ and $N$ each go independently from $0$ to $K-1$.
To find $\wt{A}(\ell\cdot \beta)$, note that in the sum  \eqref{eq-khk2}, we are considering 
those terms in which the power of $r_1$ is $\ell_1\beta_1$ and the power of $r_b$ is $\ell_b\beta_b$ for $2\leq b \leq n$. Notice that for these powers of $r_j$, the powers of $\zeta_j$'s are each 1.
Therefore, comparing the two expressions 
\eqref{eq-khk2} and \eqref{eq-aftervanishing} for $\widehat{B}(r_1,\dots, r_n)$, we conclude that  for each $\beta\in \mathfrak{G}^*(k)$ we have
\begin{align}
    \wt{A}(\ell\cdot\beta)& ={\sideset{}{'}\sum} \ol{\zeta_1}^{j_1(2K-\nu-N-1)} \prod_{b=2}^n \ol{\zeta}_b^{j_b(\nu+N+m_b+M_b+2)} \label{eq-atilde-1} \\
    &= {\sideset{}{'}\sum} 1,\label{eq-atilde-2}
\end{align}
where ${\sideset{}{'}\sum}$ denotes a sum extending over the set of indices 
$j=(j_1,j_2,\dots, j_n), m=(m_2,\dots, m_n), M=(M_2,\dots, M_n)$ 
and $\nu, N$ ranging over
\[ \begin{cases} j\in \mathfrak{B},\, m, M \in \mathfrak{C}\\
0\leq \nu, N \leq K-1\\
 m_b + M_b + \nu + N + 2 - \ell_b = \beta_b \ell_b, \quad \text{for each}\quad 2\leq b \leq n\\
 2K-\nu-N-\ell_1-1 = \beta_1\ell_1.
 \end{cases} \]
 The expression in \eqref{eq-atilde-2} follows from \eqref{eq-atilde-1} since for each such index, the summand is clearly 1. Observe now that 
 in the range of summation described above, the indices $j=(j_1,j_2,\dots, j_n)\in \mathfrak{B}$ (with $\mathfrak{B}$ as in \eqref{eq-gothicb}) vary freely without any interaction with the other indices $m, M, \nu, N$.
 Therefore,
\begin{equation}\label{eq-hkbeta}
    \wt{A}(\ell\cdot\beta)=\eqref{eq-atilde-2} = \sum_{j\in \mathfrak{B}} C(\beta) = \abs{\mathfrak{B}}\cdot C(\beta)=L\cdot C(\beta),
\end{equation}
where as in the statement of the theorem, $L= \prod_{a=1}^n \ell_a$, and  $C(\beta)$ is the number of solutions in integers $m=(m_2,\dots,m_n), \, M=(M_2,\dots, M_n),\, \nu,\, N$ of the system of equations and inequalities given by
\begin{align}
\begin{cases}
    0 \leq m_b, M_b \leq \ell_b-1, &\text{ for each } 2\leq b \leq n \nonumber\\ 
    0 \leq \nu,N \leq K-1,\\ 
    m_b + M_b + \nu + N = \ell_b(\beta_b + 1) -2 & \text{ for each } 2\leq b \leq n.  \\
    \nu + N = 2K - \ell_1(\beta_1 + 1)-1. & 
    \end{cases}
\end{align}
To find $C(\beta)$, we first note that the third equation may be replaced (with the help of the last equation) by the equivalent equation
\begin{equation}\label{condition A'}
m_b+M_b=\ell_b(\beta_b+1)+\ell_1(\beta_1+1)-2K-1 \text{ for each } 2\leq  b \leq n.
\end{equation}
Consequently, the number of solutions $C(\beta)$ of
the system  can be obtained by  
 multiplying together the number of solutions of
 \[ \nu + N = 2K - \ell_1(\beta_1 + 1)-1, \quad  0 \leq \nu,N \leq K-1\] with the number of solutions for each $b,$ with $ 2\leq b \leq n$ to 
 \[ m_b+M_b=\ell_b(\beta_b+1)+\ell_1(\beta_1+1)-2K-1,\quad  0 \leq m_b, M_b \leq \ell_b-1. \]
 To represent these numbers, 
for integers $\lambda, \mu$, define $\mathsf{D}_\lambda(\mu)$ to be the number of integer solutions $(x,y)\in \mathbb{Z}^2$  of the system of equations and inequalities:
 \begin{align}
     &x+y=\mu, \label{eq-xplusy}\\
     &0\leq x \leq \lambda-1,\label{eq-xbounds}  \\
    & 0 \leq y \leq \lambda-1. \label{eq-ybounds}
 \end{align}
 Then clearly we have
 \begin{equation} \label{eq-cgeneral}
   C(\beta)= \mathsf{D}_K(2K-\ell_{1}(\beta_{1}+1)-1)\cdot\prod_{b=2}^n \mathsf{D}_{\ell_b} \left( \ell_{b}(\beta_b +1) +\ell_{1}(\beta_{1} + 1) -2K-1 \right) .   
 \end{equation}
 
 \textbf{Claim:} the numbers $\mathsf{D}_\lambda(\mu)$ are given by the formula
 \eqref{eq-dcombinations} that precedes the statement of Theorem~\ref{thm-computation}.

  Indeed, if $\mu \leq -1$, then by \eqref{eq-xplusy}, we have  $x+y \leq -1$. However, from \eqref{eq-xbounds} and \eqref{eq-ybounds} in the definition of $\mathsf{D}_\lambda(\mu)$, this is impossible. Hence, $\mathsf{D}_\lambda(\mu) = 0$. Similarly, if  $\mu \geq 2\lambda-1$, then by \eqref{eq-xplusy},  $x+y \geq 2\lambda-1$. However, from \eqref{eq-xbounds} and \eqref{eq-ybounds} in the definition of $\mathsf{D}_\lambda(\mu)$, this is impossible. Hence, $\mathsf{D}_\lambda(\mu) = 0$.

In the other cases, it is easy to enumerate the solutions. If $0 \leq \mu \leq \lambda-1$, then
\[ \mathsf{D}_\lambda(\mu)=  \abs{ \{(x, \mu-x) 	:  0 \leq x \leq \mu\} } = \mu +1,\]
and if $\lambda \leq \mu \leq 2\lambda-2$, then 
\[\mathsf{D}_\lambda(\mu) = \abs{\{(x, \mu-x) :  \mu -\lambda + 1 \leq x \leq \lambda-1\}} = 2\lambda-1-\mu,\]
completing the proof of the claim.

From \eqref{eq-aftervanishing} and \eqref{eq-hkbeta} we see that
\begin{equation}
    \label{eq-penultimate}
  \mathbb{B}_{\mathscr{H}(k)}(\phi(z),w)=\widehat{B}(r_1,\dots, r_n)=\frac{1}{\pi^n L^2\Delta}\sum_{\beta\in \mathfrak{G}^*(k)}L\cdot C(\beta) r^{\ell\cdot\beta}= \frac{1}{\pi^nL\Delta}\sum_{\beta\in \mathfrak{G}^*(k)} C(\beta)\, r^{\ell\cdot\beta}.  
\end{equation}
Now
\[ \phi(z)=(\phi_1(z),\dots, \phi_n(z))=(z_1^{\ell_1},\dots, z_n^{\ell_n}).\]
Therefore, recalling the definition \eqref{eq-ra}, we see that
\begin{align*}
     r^{\ell\cdot \beta}&=(r_1^{\ell_1})^{\beta_1}\cdots (r_n^{\ell_n})^{\beta_n}\\&=(z_1^{\ell_1}\ol{w_1})^{\beta_1}\dots (z_n^{\ell_n}\ol{w_n})^{\beta_n}\\
     &= (\phi_1(z)\ol{w_1})^{\beta_1}\cdots(\phi_n(z)\ol{w_n})^{\beta_n} .
\end{align*}
Also, remembering that $\ell_b=\dfrac{K}{k_b}$ for each $b$, we have
\[ r_b^K=(z_b ^{\ell_b})^{k_b} \ol{w_b}^{k_b}=\phi_b(z)^{k_b}\ol{w_b}^{k_b}, \]
and
\[r_b^{\ell_b}=z_b^{\ell_b}\ol{w_b}=\phi_b(z)\ol{w_b}. \]
Therefore, recalling the definition \eqref{eq-Delta}, we have
\begin{align*}
    \Delta &= \left(\left(\prod_{b=2}^n r_b\right)^K-r_1^K\right)^2\cdot \prod_{b=2}^n\left(1-r_b^{\ell_b}\right)^2\\
    &= \left(\left(\prod_{b=2}^n \phi_b(z)^{k_b}\ol{w_b}^{k_b}\right)-\phi_1(z)^{k_1}\ol{w_1}^{k_1}\right)^2\cdot \prod_{b=2}^n\left(1-\phi_b(z)\ol{w_b}\right)^2.
\end{align*}
Therefore, if we replace $\phi(z)$ by $z$ in the first member of \eqref{eq-penultimate}, we see that the last member is transformed
to a function of $(t_1,\dots, t_n)$, where $t_a=z_a\ol{w_a}.$ In fact, we get \eqref{eq-kscrh}, thus completing the proof of  the result, except that in the numerator of \eqref{eq-kscrh} we have obtained the polynomial $\sum_{\beta\in \mathfrak{G}^*(k)} C(\beta)t^\beta$ instead of $\sum_{\beta\in \mathfrak{G}} C(\beta)t^\beta$. Therefore, to complete the proof, we need to show that 
if $\beta\in \mathfrak{G}^*(k)\setminus \mathfrak{G}$ then $C(\beta)=0.$ Now for such a $\beta$, there exists a $2\leq b\leq n$ such that $\ell_b=1$ and $\beta_b$ is either 0 or $2k_b$. First assume that
$\beta_b=0$. Then the factor $\mathsf{D}_{\ell_b}(\ell_b(\beta_b+1)+\ell_1(\beta_1+1)-2K-1)$ 
in the formula
\eqref{eq-cbeta} reduces to $\mathsf{D}_1(\ell_1(\beta_1+1)-2K)$. By the definition \eqref{eq-dcombinations} of $\mathsf{D}$, 
this is not zero if and only if $\ell_1(\beta_1+1)-2K=0$. However, in the latter case, 
we have the first factor of \eqref{eq-cbeta} equal to zero, since it equals
$\mathsf{D}_K(-1)$. 

In the other case $\beta_b=2k_b=2K$ we see that the factor $\mathsf{D}_{\ell_b}(\ell_b(\beta_b+1)+\ell_1(\beta_1+1)-2K-1)$  reduces to $\mathsf{D}_1(\ell_1(\beta_1+1))=0$. 
\end{proof}
\subsection{ Recapturing the special cases $\mathscr{H}(1, -k)$ and  $\mathscr{H}(k, -1)$}
\label{sec-luke}
We now show that the results of \cite{Edh16} on explicit Bergman kernels of fat and thin Hartogs triangles are special cases of Theorem~\ref{thm-computation}.

\subsubsection{$\mathscr{H}(1, -k), k \geq 1$}

 We follow the notation used in Theorem~\ref{thm-computation}. For $\mathscr{H}(1, -k)$ we have $k_1 = 1$ and $k_2 = k$. Hence $K= \mathrm{lcm}(1, k) = k$ and $L = k$. We then have \[ \mathfrak{G} = \{ (\beta_1, \beta_2) \in \mathbb Z^2 \,\,|\,\, \beta_1 = 0, \,\, 0 \leq \beta_2 \leq 2k \}.\]

 For $(0,\beta_2)\in \mathfrak{G}$, we compute $C(0,\beta_2)$,  where $0 \leq \beta_2 \leq 2k$. By \eqref{eq-cbeta}, we have
 \[C(0,\beta_2) = \mathsf{D}_k(k-1) \mathsf{D}_1(\beta_2 - k).\] Now from \eqref{eq-dcombinations}, we have  $\mathsf{D}_k(k-1) = k$ and \begin{equation*}
       \mathsf{D}_1(\beta_2-k) = 
       \begin{cases} 
      0 & 0 \leq \beta_2 \leq k-1\\
   1  & \beta_2 = k\\
       0 & k+1 \leq \beta_2 \leq 2k. \\
       \end{cases}
   \end{equation*}
Hence for $\beta = (0, \beta_2) \in \mathfrak{G}$, $C(\beta) \neq 0$ if and only if $\beta_2 = k$ and in this case, $C(\beta) = k.$ 
Hence the formula (\ref{eq-kscrh}) gives 

\[ \mathbb{B}_{\mathscr{H}(1, -k)}(z,w) =  \frac{1}{\pi^2 k}
\cdot \frac{ \displaystyle{k\,\, t_2^k}}{\displaystyle{(t_2^k-t_1)^2 (1-t_2)^2}} =  \frac{1}{\pi^2 }
\cdot \frac{ \displaystyle{ t_2^k}}{\displaystyle{(t_2^k-t_1)^2 (1-t_2)^2}} ,\]
which precisely is the content of  \cite[Theorem 1.4]{Edh16}. 

\subsubsection{$\mathscr{H}(k, -1), k \geq 2$}

In this case, $k_1 = k$ and $k_2 = 1$. Hence $K= k$ and $L= k$. We then have \[ \mathfrak{G} = \{ (\beta_1, \beta_2) \in \mathbb Z^2 \,\,|\,\, 0 \leq \beta_1 \leq 2k-2, \,\, 0 \leq \beta_2 \leq 2 \},\]

and

\begin{equation*}
       C(\beta) = 
       \begin{cases} 
      \mathsf{D}_k(2k-\beta_1-2) \mathsf{D}_k(\beta_1-k),  &  \beta = (\beta_1, 0)\\
    \mathsf{D}_k(2k-\beta_1-2) \mathsf{D}_k(\beta_1),  &  \beta = (\beta_1, 1)\\
     \mathsf{D}_k(2k-\beta_1-2) \mathsf{D}_k(\beta_1 + k),   &  \beta = (\beta_1, 2). \\
       \end{cases}
   \end{equation*}

We compute $\mathsf{D}_k$'s. 
\begin{align*}
\mathsf{D}_k(2k-\beta_1-2) &= 
       \begin{cases} 
       \beta_1 + 1,  &  \,\,0 \leq \beta_1 \leq k-1\\
       2k-\beta_1-1, & \,\, k \leq \beta_1 \leq 2k-2.\\
     \end{cases}\\
       \mathsf{D}_k(\beta_1-k)& = 
       \begin{cases} 
        0, &   0 \leq \beta_1 \leq k-1 \\
      \beta_1-k+1,  &  k \leq \beta_1  \leq 2k-2.\\
  \end{cases}\\
      \mathsf{D}_k(\beta_1)& = 
       \begin{cases} 
        \beta_1 + 1,  &   0 \leq \beta_1 \leq k-1 \\
      2k-1-\beta_1, &  k \leq \beta_1  \leq 2k-2.\\
  \end{cases}\\
       \mathsf{D}_k(\beta_1+k)& = 
       \begin{cases} 
        k-\beta_1 -1,   &   0 \leq \beta_1 \leq k-1 \\
    0, &  k \leq \beta_1  \leq 2k-2.\\
  \end{cases}\\
   \end{align*}
   
  Hence, \[\displaystyle{\sum_{\beta\in \mathfrak{G}} C(\beta) t^\beta} = \underbrace{\sum_{\beta_1 = k}^{2k-2} (2k-\beta_1-1)(\beta_1-k+1) t_1^{\beta_1}}_{\beta_2=0} +  \underbrace{\left(\sum_{\beta_1 = 0}^{k-1}(\beta_1+1)^2 t_1^{\beta_1} t_2 + \sum_{\beta_1 = k}^{2k-2} (2k-\beta_1-1)^2 t_1^{\beta_1} t_2 \right)}_{\beta_2=1}  \]
  
  \[ + \underbrace{\sum_{\beta_1 = 0}^{k-1} (\beta_1+1)(k-\beta_1-1)  t_1^{\beta_1} t_2^2}_{\beta_2=2}. \]
  
 We rewrite the terms corresponding to $\beta_2= 0, 1, 2$ as follows.
 In the term for $\beta_2=0$, by making the substitution $\ell =\beta_1 - k +1$, we obtain $\displaystyle{\left(\sum_{\ell = 1}^{k-\ell} (k-\ell) \ell\cdot  t_1^{\ell-1}\right)t_1^k}.$

 In the first sum of the second term (which corresponds to $\beta_2=1$) we make the substitution $\ell=\beta_1+1$,  which transforms it into $\displaystyle{\sum_{ \ell= 1}^{k} \ell^2 \cdot t_1^{\ell-1} t_2}$. In the second sum, we make the substitution $\ell = \beta_1 -k + 1$, which transforms it into  $\displaystyle{\sum_{\ell=1}^{k} (k-\ell)^2\cdot t_1^{k+\ell-1} t_2}$. Combining the two we can represent the second term as

 \[\sum_{ \ell= 1}^{k} \ell^2 t_1^{\ell-1} t_2 + \sum_{\ell=1}^{k} (k-\ell)^2 t_1^{k+\ell-1} t_2 =  \left(\sum_{ \ell= 1}^{k} (\ell^2 +(k-\ell)^2) t_1^k)  t_1^{\ell-1}\right) t_2.\]
 
 Similarly using the substitution $\ell = \beta_1 + 1$, the last term becomes
 \[\sum_{\beta_\ell = 0}^{k-1} (\beta_1+1)(k-\beta_1-1)  t_1^{\beta_1} t_2^2 = \sum_{\ell= 1}^{k} \ell \,\,(k- \ell)  t_1^{\ell-1} t_2^2. \]
 
 Therefore we get the expression for the Bergman kernel for $\mathscr{H}(k, -1)$ as
 \[
 \frac{1}{\pi^2 k}
\cdot \frac{\displaystyle{\left(\sum_{\ell = 1}^{k-1} (k-\ell) \cdot \ell \cdot t_1^{\ell-1}\right)t_1^k +  \left(\sum_{ \ell= 1}^{k} (\ell^2 +(k-\ell)^2) t_1^k)  t_1^{\ell-1}\right) t_2 + \left(\sum_{\ell= 1}^{k} \ell \,\,(k- \ell)  t_1^{\ell-1}\right) t_2^2}}{\displaystyle{(t_2-t_1^k)^2 (1-t_2)^2}} .\]

The above expression is precisely the statement of \cite[Theorem 1.2]{Edh16}.
\bibliographystyle{alpha}
\bibliography{bergman}

\begin{thebibliography}{{Cha}18}

\bibitem[Beh33]{behnke1933}
Heinrich Behnke.
\newblock {Z}ur {T}heorie der {S}ingularit{\"a}ten der {F}unktionen mehrerer
  komplexen {V}er{\"a}nderlichen.
\newblock {\em Mathematische Annalen}, 108(1):91--104, 1933.

\bibitem[Bel82]{belltransformation}
Steven~R. Bell.
\newblock The {B}ergman kernel function and proper holomorphic mappings.
\newblock {\em Trans. Amer. Math. Soc.}, 270(2):685--691, 1982.

\bibitem[Bel05]{bellquadrature}
Steven~R. Bell.
\newblock The {B}ergman kernel and quadrature domains in the plane.
\newblock In {\em Quadrature domains and their applications}, volume 156 of
  {\em Oper. Theory Adv. Appl.}, pages 61--78. Birkh\"{a}user, Basel, 2005.

\bibitem[Bre55]{brem}
H.~J. Bremermann.
\newblock Holomorphic continuation of the kernel function and the {B}ergman
  metric in several complex variables.
\newblock In {\em Lectures on functions of a complex variable}, pages 349--383.
  The University of Michigan Press, Ann Arbor, 1955.

\bibitem[Cat80]{catlin}
David Catlin.
\newblock Boundary behavior of holomorphic functions on pseudoconvex domains.
\newblock {\em J. Differential Geom.}, 15(4):605--625 (1981), 1980.

\bibitem[CEM19]{ChEdMc}
D.~Chakrabarti, L.~D. Edholm, and J.~D. McNeal.
\newblock Duality and approximation of {B}ergman spaces.
\newblock {\em Adv. Math.}, 341:616--656, 2019.

\bibitem[{Cha}18]{chaksibony}
Debraj {Chakrabarti}.
\newblock {On an observation of Sibony}.
\newblock {\em arXiv e-prints; To appear in Proc. of the Amer. Math.Soc.}, page
  arXiv:1807.05277, Jul 2018.

\bibitem[Che17]{chen}
Liwei Chen.
\newblock The {$L^p$} boundedness of the {B}ergman projection for a class of
  bounded {H}artogs domains.
\newblock {\em J. Math. Anal. Appl.}, 448(1):598--610, 2017.

\bibitem[CKY19]{chenkrantz}
Liwei {Chen}, Steven~G. {Krantz}, and Yuan {Yuan}.
\newblock {$L^p$ regularity of the Bergman Projection on domains covered by the
  polydisk}.
\newblock {\em arXiv e-prints}, page arXiv:1903.10497, Mar 2019.

\bibitem[CZ16]{chakzeytuncu}
Debraj Chakrabarti and Yunus~E. Zeytuncu.
\newblock {$L^p$} mapping properties of the {B}ergman projection on the
  {H}artogs triangle.
\newblock {\em Proc. Amer. Math. Soc.}, 144(4):1643--1653, 2016.

\bibitem[DS04]{durenbergman}
Peter Duren and Alexander Schuster.
\newblock {\em Bergman spaces}, volume 100 of {\em Mathematical Surveys and
  Monographs}.
\newblock American Mathematical Society, Providence, RI, 2004.

\bibitem[Edh16]{Edh16}
Luke~D. Edholm.
\newblock {B}ergman theory of certain generalized {H}artogs triangles.
\newblock {\em Pacific J. Math.}, 284(2):327--342, 2016.

\bibitem[EM16]{EdhMcN16}
L.~D. Edholm and J.~D. McNeal.
\newblock The {B}ergman projection on fat {H}artogs triangles: ${L}^p$
  boundedness.
\newblock {\em Proc. Amer. Math. Soc.}, 144(5):2185--2196, 2016.

\bibitem[EM17]{EdhMcN16b}
L.~D. Edholm and J.~D. McNeal.
\newblock Bergman subspaces and subkernels: degenerate ${L}^p$ mapping and
  zeroes.
\newblock {\em J. Geom. Anal.}, 27(4):2658--2683, 2017.

\bibitem[FS09]{analytic}
Philippe Flajolet and Robert Sedgewick.
\newblock {\em Analytic combinatorics}.
\newblock Cambridge University Press, Cambridge, 2009.

\bibitem[HS80]{hakimsibony}
Monique Hakim and Nessim Sibony.
\newblock Spectre de {$A(\bar \Omega )$}\ pour des domaines born{\'e}s
  faiblement pseudoconvexes r{\'e}guliers.
\newblock {\em J. Funct. Anal.}, 37(2):127--135, 1980.

\bibitem[Huo18]{huo}
Zhenghui Huo.
\newblock {$L^p$} estimates for the {B}ergman projection on some {R}einhardt
  domains.
\newblock {\em Proc. Amer. Math. Soc.}, 146(6):2541--2553, 2018.

\bibitem[JP08]{jarpflubook}
Marek Jarnicki and Peter Pflug.
\newblock {\em First steps in several complex variables: {R}einhardt domains}.
\newblock EMS Textbooks in Mathematics. European Mathematical Society (EMS),
  Z{\"u}rich, 2008.

\bibitem[Kra13]{krantzbergman}
Steven~G. Krantz.
\newblock {\em Geometric analysis of the {B}ergman kernel and metric}, volume
  268 of {\em Graduate Texts in Mathematics}.
\newblock Springer, New York, 2013.

\bibitem[Par18]{park}
Jong-Do Park.
\newblock The explicit forms and zeros of the {B}ergman kernel for
  3-dimensional {H}artogs triangles.
\newblock {\em J. Math. Anal. Appl.}, 460(2):954--975, 2018.

\bibitem[Ran86]{range}
R.~Michael Range.
\newblock {\em Holomorphic functions and integral representations in several
  complex variables}, volume 108 of {\em Graduate Texts in Mathematics}.
\newblock Springer-Verlag, New York, 1986.

\bibitem[Sha15]{shawhartogs}
Mei-Chi Shaw.
\newblock The {H}artogs triangle in complex analysis.
\newblock In {\em Geometry and topology of submanifolds and currents}, volume
  646 of {\em Contemp. Math.}, pages 105--115. Amer. Math. Soc., Providence,
  RI, 2015.

\bibitem[Sib75]{sibony1975}
Nessim Sibony.
\newblock Prolongement des fonctions holomorphes born{\'e}es et m{\'e}trique de
  {C}arath{\'e}odory.
\newblock {\em Invent. Math.}, 29(3):205--230, 1975.

\end{thebibliography}
\end{document}